\newtheorem{theorem}{Theorem}[section]
\newtheorem{corollary}[theorem]{Corollary}
\newtheorem{definition}[theorem]{Definition}
\newtheorem{lemma}[theorem]{Lemma}
\newtheorem{proposition}[theorem]{Proposition}
\newtheorem{remark}[theorem]{Remark}
\title{The elastica problem under area constraint}
\author[V. Ferone, B. Kawohl, C. Nitsch]
       {Vincenzo Ferone, 
       Bernd Kawohl, 
       Carlo Nitsch
       }
\address[V.~Ferone]{Universit\`a degli Studi di Napoli Federico II (Italy).}
\email{ferone@unina.it}
\address[B.~Kawohl]{Universit\"at zu K\"oln (Germany).}
\email{kawohl@math.uni-koeln.de}
\address[C.~Nitsch]{Universit\`a degli Studi di Napoli Federico II (Italy)\\ 
 Universit\"at zu K\"oln (Germany).}
\email{c.nitsch@unina.it}
\date{\today}
\keywords{Euler elastica, isoperimetric inequality, curve shortening flow.} 
\subjclass[2010]{49Q19, 51M16, 53C44.}
\begin{document}
\maketitle

\begin{abstract}
We show that the elastic energy $E(\gamma)$ of a closed curve $\gamma$ has a minimizer among all plane simple regular closed curves of given enclosed area $A(\gamma)$, and that the minimum is attained for a circle. The proof is of a geometric nature and deforms parts of $\gamma$ in a finite number of steps to construct some related convex sets with smaller energy.
\end{abstract}

\section{introduction}

The origin of the elastica problem can be traced back to the birth of the calculus of variations \cite{Levien2008,Truesdell1983}. As early as 1691 James Bernoulli provided the first example of a mathematical formulation for the special case of a bending lamina fixed at one end point and subject to a load at the other end point. Half a century later Daniel Bernoulli, in the attempt to minimize the bending energy of inextensible wire, proposed a refined version of the problem.
He wrote to Euler because of his expertise, that nobody was indeed more qualified than him in using the ``methodum isoperimetricorum" (isoperimetric method). It was the year 1744 and the calculus of variations was in its early stages when Euler wrote his treatise on variational techniques (\emph{Methodus inveniendi lineas curvas maximi minimive proprietate gaudentes, sive solutio problematis isoperimetrici lattissimo sensu accepti}). There he systematically investigated and unified many celebrated variational problems that are today considered to be mathematical folklore. In his masterpiece he devoted an entire chapter, \emph{De Curvis Elasticis}, to working out a complete characterization of those curves which are solutions to the elastica problem. 

Since then the curves are called ``elasticae''. 
The subject of elasticae has attracted generations of mathematicians as well as physicists and it is still an active field of investigation.

In a very broad sense elasticae are curves which are stationary points of the elastic energy functional. The elastic energy of a smooth curve $\gamma$ is the integral of its squared curvature. The original formulation of the Elastica Problem was worked out for planar curves but later on it was generalized to several dimensions as well as to curves on manifolds \cite{Koiso1992,Langer1984,Langer1984_1,Singer2008}. An elastica can be open or closed. Generally speaking the word elastica is used to denote a stationary point of the elastic energy for given length. In contrast to this elasticae without length constraint are  sometimes called free elasticae. One of the first studies on these was done by Radon \cite{Radon1928}, 

When dealing with closed elasticae several different constraints can be considered. In the planar case, for instance, together with fixed length an additional area constraint has been considered among others by \cite{Arreaga2002,Bianchini2014,Veerapaneni2009,Watanabe2000,Watanabe2002}. Another possibility is to restrict the set of curves to those which lie inside a given domain, see for instance \cite{Dondl2011}. 
In \cite{Arreaga2002,Giomi2012} the authors considered the problem of minimizing a functional involving the elastic energy of an inextensible closed wire and the area of the minimal surface spanned by such a curve, thus combining two of the most prominent problems in the calculus of variations into what we can refer to as the Plateau-Elastic problem. 

In curve shortening flow, where the speed of the shrinking curve is proportional to its curvature, the rate of decrease in length is the elastic energy of the curve. As shown in \cite{Gage1983}, an optimal bound on the elastic energy is crucial to prove that the isoperimetric deficit decreases as the curve shrinks to a point. 

The study of the gradient flow of the elastic energy functional gives rise to 
curve straightening flow. This subject is under intensive investigation, see for example \cite{Dziuk2002,Langer1985,Linner1998,Lin2012,Okabe2007,Wen1995}. 

The elastic energy also appeared in the study of the spectrum of the Laplacian under Dirichlet as well as Neumann boundary condition. In fact, for a planar smooth bounded set the elastic energy of its boundary is one of the leading coefficients in the power series expansion of the associated heat kernel \cite{Smith1981,Watanabe2000,Watanabe2002}. For this reason the minimization of elastic energy plays a role in the characterization of those domains that are spectrally determined. Moreover, in differential geometry there is a close connection between the Willmore functional for a surface of revolution in $\mathbb{R}^3$ and the elastic energy of curves immersed in the hyperbolic plane, see for example \cite{Langer1984_2, Willmore2000}.

In spite of all the literature on the subject, it has been noted in \cite{Bianchini2014} that there exists a very simple question pertaining to elasticae that is apparently still unsolved.
\\

\noindent{\bf Open problem.} \emph{Is there a curve that minimizes the elastic energy in the class of all simple smooth planar curves that enclose a given area and, if so, is this curve a circle?}
\\


For a precise mathematical statement let $(\bf{e_1},{\bf e_2})$ be the canonical orthonormal basis of $\mathbb{R}^2$. For a smooth regular planar curve $\gamma:[0,L]\to \mathbb{R}^2$ parametrized by arc length $s$, by convention we define the normal vector ${\bf n}(s)$ so that $\bf{n}(s)$ and  the unit tangent vector ${\bf t}(s)=\gamma'(s)$ form for all $s\in [0,L]$ a basis $({\bf t},{\bf n})$ that has the same orientation as the basis $(\bf{e_1},{\bf e_2})$. If $\gamma\in W^{2,2}([0,L]; \mathbb{R}^2)$, we can define the scalar signed curvature $k(s)$ as
$$\frac{d}{ds}{\bf t}(s)=:k(s)\,{\bf n}(s),$$

\noindent and the elastic energy $E(\gamma)$ by
$$E(\gamma):=\frac{1}{2}\int_\gamma k(s)^2\ ds.$$

If the curve is simple and $\gamma(0)=\gamma(L)$, we denote by $\Omega_\gamma$ the bounded open set of $\mathbb{R}^2$ with boundary $\gamma=\partial\Omega_\gamma$ and by $Area(\Omega_\gamma)$ or more simply by $A(\gamma)$ the area of $\Omega_\gamma$. Throughout the paper we also choose the arc length representation in which the normal to $\gamma$ points into $\Omega_\gamma$. Such a parametrization is commonly called {\emph{counterclockwise orientation} (or \emph{positive orientation}).
In the following we speak of a curve $\gamma:[0,L]\to \mathbb{R}^2$ as a \emph {regular  closed curve} if $\gamma(0)=\gamma(L)$ and $\gamma'(0)=\gamma'(L)$.

Our main result is the following. 

\begin{theorem}\label{main_th}
For any given simple regular closed curve $\gamma$ in $\mathbb{R}^2$, with $\gamma\in W^{2,2}$, we have
\begin{equation}\label{isopineq}
A(\gamma)E(\gamma)^2\ge \pi^3,
\end{equation}
and equality holds if and only if $\gamma$ is a circle.
\end{theorem}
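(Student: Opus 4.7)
The natural strategy is to treat the convex case via classical inequalities and then reduce a general curve to the convex case by a geometric deformation in finitely many steps, as suggested by the abstract. For convex $\gamma$, the result follows by combining Gage's inequality \cite{Gage1983} --- which asserts that $\int_\gamma k^2 \, ds \ge \pi L(\gamma)/A(\gamma)$ for convex closed curves, with equality only for the circle --- with the classical isoperimetric inequality $L(\gamma)^2 \ge 4\pi A(\gamma)$. Chaining these,
$$
2 E(\gamma) = \int_\gamma k^2 \, ds \ge \frac{\pi L(\gamma)}{A(\gamma)} \ge \frac{2\pi^{3/2}}{\sqrt{A(\gamma)}},
$$
so $A(\gamma) E(\gamma)^2 \ge \pi^3$, with equality characterizing the circle.

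For a non-convex $\gamma$ the plan is to construct a convex curve $\tilde\gamma$ in finitely many geometric steps such that $A(\tilde\gamma) E(\tilde\gamma)^2 \le A(\gamma) E(\gamma)^2$, after which the convex case applied to $\tilde\gamma$ completes the proof. The deformation acts on the concave arcs of $\gamma$, i.e.\ the maximal subarcs where $k<0$. For each such arc $c$ with endpoints $P$, $Q$, one inspects the chord $PQ$ appearing as an edge of the convex hull of $\Omega_\gamma$ and the associated pocket between $c$ and that chord, and replaces $c$ by a suitable convex arc --- be it a straight chord with smoothed corners, a reflection of $c$ across the chord, or a more carefully tailored convex arc --- chosen so that $AE^2$ strictly decreases. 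Iterating over all concave arcs produces a convex curve after finitely many steps.

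The main obstacle is proving that one can indeed arrange each local modification to strictly decrease $AE^2$. The two most natural candidates both fail on their own: chord replacement reduces $E$ but increases $A$ (and introduces corners that must then be smoothed, at some energy cost), while reflection of $c$ across the chord preserves $E$ but also increases $A$. The key technical point is therefore a sharp geometric estimate relating the elastic energy of a concave arc to the area of the pocket it bounds with the convex hull, so that the deformation can be designed with the decrease in $E$ dominating the increase in $A$ when the scale-invariant product is formed. Once this monotonicity is secured, the equality case comes for free: each nontrivial deformation is strict, so saturation in the original inequality forces $\gamma$ to be convex, and then equality in the Gage--isoperimetric chain forces $\gamma$ to be a circle.
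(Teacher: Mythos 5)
Your convex case is exactly right, and matches the paper's Proposition \ref{pr_gage}. The gap is in the non-convex reduction, and it is not merely a deferred technicality: the ``key technical point'' you postpone --- a sharp estimate guaranteeing that concave arcs can be replaced by convex arcs in their pockets with the decrease in $E$ dominating the increase in $A$ --- is false. The example of Figure \ref{fig_henrot} (a cross-shaped curve built from twelve segments and twelve quarter-circles of fixed radius, with arm length $d_n\to\infty$ and arm width $\delta_n\to 0$) has bounded $A(\gamma_n)E(\gamma_n)^2$, but any convexification that retains the convex arcs of $\gamma_n$ and only reworks the concave arcs must enclose the convex hull of those retained arcs, whose area grows like $d_n^2$, while its energy stays bounded below by the energy of the fixed quarter-circles. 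Hence $A(\tilde\gamma_n)E(\tilde\gamma_n)^2\to\infty$: no local replacement of concave arcs by convex arcs, however cleverly tailored, can produce a convex curve with smaller $AE^2$. Any deformation scheme must therefore \emph{decrease} area, i.e.\ push the curve inward rather than outward into the pockets, and then one must confront self-intersections, which your scheme avoids entirely. A secondary problem: a maximal concave arc has negative total turning between its endpoint tangents, while any convex replacement arc has nonnegative total turning, so a $C^1$ arc-by-arc swap as you describe cannot even preserve tangency at the inflection points without also modifying the adjacent convex arcs.

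The paper's route is structurally different and worth contrasting. It never convexifies. Instead it deforms parts of the curve inward (area strictly decreases, energy does not increase), accepts that the curve may pinch, splits it at pinch points into cusped pieces, and iterates until every maximal convex arc turns by more than $\pi$ and every maximal concave arc by less than $\pi$ in absolute value. A topological argument then produces two disjoint arcs each of total curvature $\pi$ with parallel endpoint tangents bounding convex subsets of $\Omega_\gamma$; doubling each such arc by a half-turn about its chord's midpoint yields two convex sets to which Gage's inequality applies, and an AM--GM step (Proposition \ref{two_convex}) closes the argument. The essential insight you are missing is that the convex case need only be invoked on these two auxiliary doubled sets, not on a convexification of $\gamma$ itself.
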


We should mention that  Theorem \ref{main_th} is no longer true if $\Omega_\gamma$ is not simply connected. Any thin annulus can serve as a counterexample.

However, a special case of Theorem \ref{main_th} has recently been pointed out in \cite{Bianchini2014} as being a consequence of a famous result of Gage.
\begin{proposition}\label{pr_gage}
For any given simple closed curve $\gamma$ in $\mathbb{R}^2$, boundary of a {\bf convex} set, with $\gamma$ parametrized by arc length of class $C^{1}$ piecewise $C^2$, we have$$A(\gamma)E(\gamma)^2\ge \pi^3,$$
and equality holds if and only if $\gamma$ is a circle.
\end{proposition}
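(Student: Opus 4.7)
\medskip

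\noindent\textbf{Proof proposal.} The plan is to derive the proposition by combining two classical inequalities that hold on convex curves: Gage's inequality and the standard isoperimetric inequality. This is precisely the reduction hinted at in the text (``a consequence of a famous result of Gage''), and the bulk of the work has already been done by Gage.

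\medskip

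\noindent\emph{Step 1: Apply Gage's inequality.} For a $C^{1}$ piecewise $C^{2}$ convex curve $\gamma$ of length $L$ enclosing a convex region of area $A$, Gage's 1983 inequality states
\[
\int_\gamma k(s)^2\, ds \;\ge\; \frac{\pi L}{A},
\]
with equality if and only if $\gamma$ is a circle. Translating to the notation of the paper, $2 E(\gamma) \ge \pi L / A$, i.e.
\[
E(\gamma) \;\ge\; \frac{\pi L}{2 A}.
\]

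\noindent\emph{Step 2: Invoke the classical isoperimetric inequality.} Since $\gamma$ is a simple closed curve with enclosed area $A$, one has $L^2 \ge 4\pi A$, hence $L \ge 2 \sqrt{\pi A}$, again with equality if and only if $\gamma$ is a circle.

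\medskip

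\noindent\emph{Step 3: Combine.} Inserting the bound on $L$ from Step 2 into the bound from Step 1 yields
\[
E(\gamma) \;\ge\; \frac{\pi \cdot 2 \sqrt{\pi A}}{2 A} \;=\; \frac{\pi^{3/2}}{\sqrt{A}},
\]
which, after squaring and multiplying by $A$, gives $A(\gamma) E(\gamma)^2 \ge \pi^3$.

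\medskip

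\noindent\emph{Step 4: Equality case.} If equality holds in our inequality, then equality must hold simultaneously in both Gage's inequality and the isoperimetric inequality. Either condition forces $\gamma$ to be a circle, and conversely a direct computation on a circle of radius $r$ gives $A = \pi r^2$ and $E = \pi/r$, so $A E^2 = \pi^3$.

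\medskip

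\noindent There is essentially no obstacle here beyond quoting Gage's theorem; the only small point to verify is that Gage's inequality applies under the stated $C^{1}$ piecewise $C^{2}$ regularity (rather than the $C^{2}$ assumption in Gage's original statement), but on a convex curve the support function argument behind Gage's proof goes through with this weaker regularity by a standard approximation. Thus the proposition follows as a clean corollary, and the real content of Theorem~\ref{main_th} is removing the convexity assumption, which is what the subsequent geometric deformation argument of the paper must address.
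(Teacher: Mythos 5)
Your proof is correct and is exactly the argument the paper itself gives: combine Gage's inequality $2E(\gamma)\ge \pi L/A(\gamma)$ for convex curves with the classical isoperimetric inequality $L^2\ge 4\pi A$ to get $A(\gamma)E(\gamma)^2\ge \tfrac{\pi^2}{4}\tfrac{L^2}{A(\gamma)}\ge\pi^3$, with equality forcing a circle. Nothing further is needed.
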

Note that no convexity is required in Theorem \ref{main_th}. 
The passage from convex $\Omega_\gamma$ to simply connected $\Omega_\gamma$ is not trivial. We will show a counterexample in Figure \ref{fig_henrot} in which, for example, passing from $\Omega_\gamma$ to its convex hull brings an increase of the product $A(\cdot)E(\cdot)$.

As pointed out in \cite{Bianchini2014} Proposition \ref{pr_gage} is essentially due to Gage \cite{Gage1990,Gage1983}. In fact, while Gage derived 
\begin{equation}\label{gage}
 2 E(\gamma)\geq \pi\frac{L}{A(\gamma)}
\end{equation} 
for convex sets $\Omega_\gamma$, an inequality which fails to carry over to nonconvex ones, (\ref{isopineq}) follows from (\ref{gage}) via the classical isoperimetric inequality, because
$$A(\gamma)E(\gamma)^2\geq\frac{\pi^2}{4}\frac{L^2}{A(\gamma)}\geq\pi^3.$$
\medskip

Before entering into a description of our approach to the problem, we would like to explain some of the difficulties that we had to overcome and discuss a few important consequences of Theorem \ref{main_th}.

First of all let us consider the classical problem of minimizing elastic energy in the class of closed curves of fixed length $L$.
The circle is the only minimizer. This fact is fairly easy to prove. Assuming a regular closed curve $\gamma\in W^{2,2}([0,L]; \mathbb{R}^2)$ (not necessarily simple) is parametrized by arc length, the following H\"older inequality
\[
\left(\int_\gamma k(s)\,ds\right)^2 \le L \int_\gamma k(s)^2\,ds
\]
and the Gauss--Bonnet Theorem yields
\begin{equation}\label{eq_fixedL}
L\,E(\gamma)\ge 2\pi^2.
\end{equation}
Moreover, equality holds if and only if $k$ is constant.

Actually a more refined investigation carried out for instance in \cite{Avvakumov2012,Sachkov2012} has shown that there exist only two stationary points: the circle, and the so-called Bernoulli $\infty$-shape (or figure 8 shape) elastica. Moreover in \cite{Enomoto1997} the author studies a quantitative version of \eqref{eq_fixedL} in the spirit of Bonnesen-type isoperimetric inequalities as described in \cite{Osserman1979}. Among other things he proves that for a planar simple curve $\gamma$ of length $L$ we have
\begin{equation}\label{eq_enomoto}
L E(\gamma) - 2\pi^2\ge \frac{\pi^2(R-r)^2}{L^2},
\end{equation}
where $r$ and $R$ are the inradius and outer radius of $\Omega_\gamma$.

We emphasize that in view of the classical isoperimetric inequality $L\geq\sqrt{4\pi A}$ our new inequality \eqref{isopineq} is stronger than \eqref{eq_fixedL}. 

Indeed, in analogy to the   {\em isoperimetric deficit} 
$$\delta L(\gamma)=\frac{L - (4\pi A)^{1/2}}{(4\pi A)^{1/2}}$$
we can define the {\em elastic deficit} as
$$\delta E(\gamma):=\frac{L E(\gamma) - 2\pi^2}{2\pi^2}$$
and arrive at
\begin{corollary}
For any given simple regular closed curve $\gamma$ in $\mathbb{R}^2$, with $\gamma\in W^{2,2}$, it holds
\begin{equation*}
\delta E(\gamma)\ge \delta L(\gamma).
\end{equation*}
\end{corollary}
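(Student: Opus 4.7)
The corollary is a direct algebraic consequence of Theorem \ref{main_th}, so the plan is simply to unwind the definitions of the two deficits and reduce the desired inequality to \eqref{isopineq}.

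First I would write out both sides explicitly. The inequality $\delta E(\gamma)\ge \delta L(\gamma)$ reads
$$\frac{L E(\gamma)-2\pi^2}{2\pi^2}\ge \frac{L-\sqrt{4\pi A(\gamma)}}{\sqrt{4\pi A(\gamma)}}.$$
Adding $1$ to both sides and multiplying through, this is equivalent to
$$\frac{L E(\gamma)}{2\pi^2}\ge \frac{L}{\sqrt{4\pi A(\gamma)}},$$
and since $L>0$ this in turn is equivalent to
$$E(\gamma)\ge \frac{2\pi^2}{\sqrt{4\pi A(\gamma)}}=\frac{\pi^{3/2}}{\sqrt{A(\gamma)}}.$$
Squaring (both sides are positive, since $E(\gamma)>0$ for any non-trivial closed curve) yields exactly $A(\gamma)E(\gamma)^2\ge \pi^3$, which is \eqref{isopineq}.

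So the main body of the proof is to note that all of these manipulations are reversible, and then invoke Theorem \ref{main_th} to conclude. There is no real obstacle; the substantive content lies entirely in Theorem \ref{main_th}. If desired, one could additionally record the equality case: equality in $\delta E(\gamma)\ge \delta L(\gamma)$ holds iff equality holds in \eqref{isopineq}, hence iff $\gamma$ is a circle, in which case both deficits vanish.
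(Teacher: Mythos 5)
Your proof is correct and matches the paper's (implicit) argument exactly: the paper derives this corollary by precisely the same observation that $\delta E(\gamma)+1 = LE(\gamma)/(2\pi^2)$ and $\delta L(\gamma)+1 = L/\sqrt{4\pi A(\gamma)}$, so the inequality reduces to \eqref{isopineq}. Nothing is missing.
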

Moreover, by combining \eqref{isopineq} with the Bonnesen-type inequality  in \cite[Theorem 4]{Osserman1979} 
$$L^2-4\pi A\ge\pi^2(R-r)^2$$
we can also improve \eqref{eq_enomoto} in the following way.
\begin{corollary}
For any given simple regular closed curve $\gamma$ in $\mathbb{R}^2$, with $\gamma\in W^{2,2}$, we have
\begin{equation*}
L E(\gamma) - 2\pi^2\ge \frac{\pi^4(R-r)^2}{L^2}.
\end{equation*}
\end{corollary}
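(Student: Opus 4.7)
The excerpt already furnishes the two ingredients: the main inequality $A(\gamma)E(\gamma)^2\ge\pi^3$ and Osserman's Bonnesen-type estimate $L^2-4\pi A\ge\pi^2(R-r)^2$. The plan is a purely algebraic combination of the two, together with the basic length bound \eqref{eq_fixedL}. To lighten notation I write $E=E(\gamma)$ and $A=A(\gamma)$.

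First I would eliminate $A$. Osserman's inequality rearranges as $A\le\bigl(L^2-\pi^2(R-r)^2\bigr)/(4\pi)$, while Theorem~\ref{main_th} gives $A\ge\pi^3/E^2$. Chaining these two bounds on $A$ and multiplying through by $4\pi E^2$ produces
\begin{equation*}
E^2 L^2 - 4\pi^4 \;\ge\; E^2\pi^2(R-r)^2.
\end{equation*}
The left-hand side factors as $(EL-2\pi^2)(EL+2\pi^2)$, and since \eqref{eq_fixedL} guarantees $EL\ge 2\pi^2>0$, I may divide to obtain
\begin{equation*}
L E - 2\pi^2 \;\ge\; \frac{E^2\pi^2(R-r)^2}{EL+2\pi^2}.
\end{equation*}

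To finish, it remains to replace the denominator $EL+2\pi^2$ by the cleaner $L^2/\pi^2$ that the statement requires. In other words I need $\dfrac{E^2\pi^2}{EL+2\pi^2}\ge\dfrac{\pi^4}{L^2}$, equivalently (after cross-multiplying the positive quantities) $(EL)^2-\pi^2(EL)-2\pi^4\ge 0$. Setting $x:=EL$, this is the quadratic inequality $(x-2\pi^2)(x+\pi^2)\ge 0$, which holds because $x\ge 2\pi^2$ by \eqref{eq_fixedL}. Substituting back yields the claimed
\begin{equation*}
LE(\gamma)-2\pi^2 \;\ge\; \frac{\pi^4(R-r)^2}{L^2}.
\end{equation*}

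There is no real obstacle here: the content is entirely in Theorem~\ref{main_th}, and the rest is a short manipulation. The only mildly delicate point is recognising that the bound $EL\ge 2\pi^2$ furnished by \eqref{eq_fixedL} is exactly what is needed to pass from the intermediate estimate (with denominator $EL+2\pi^2$) to the stated estimate (with denominator $L^2/\pi^2$); this is also consistent with the fact that equality in all three inequalities used—Theorem~\ref{main_th}, Osserman, and \eqref{eq_fixedL}—is attained simultaneously only for the circle, where both sides vanish.
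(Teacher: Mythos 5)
Your proposal is correct and follows essentially the same route as the paper, which simply asserts that the corollary follows ``by combining'' inequality \eqref{isopineq} with Osserman's Bonnesen-type bound $L^2-4\pi A\ge\pi^2(R-r)^2$; your computation is a valid realization of that combination, with the bound $LE\ge 2\pi^2$ from \eqref{eq_fixedL} (itself implied by \eqref{isopineq} and the isoperimetric inequality) supplying the final denominator comparison.
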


\begin{figure}
\def\svgwidth{4.in}
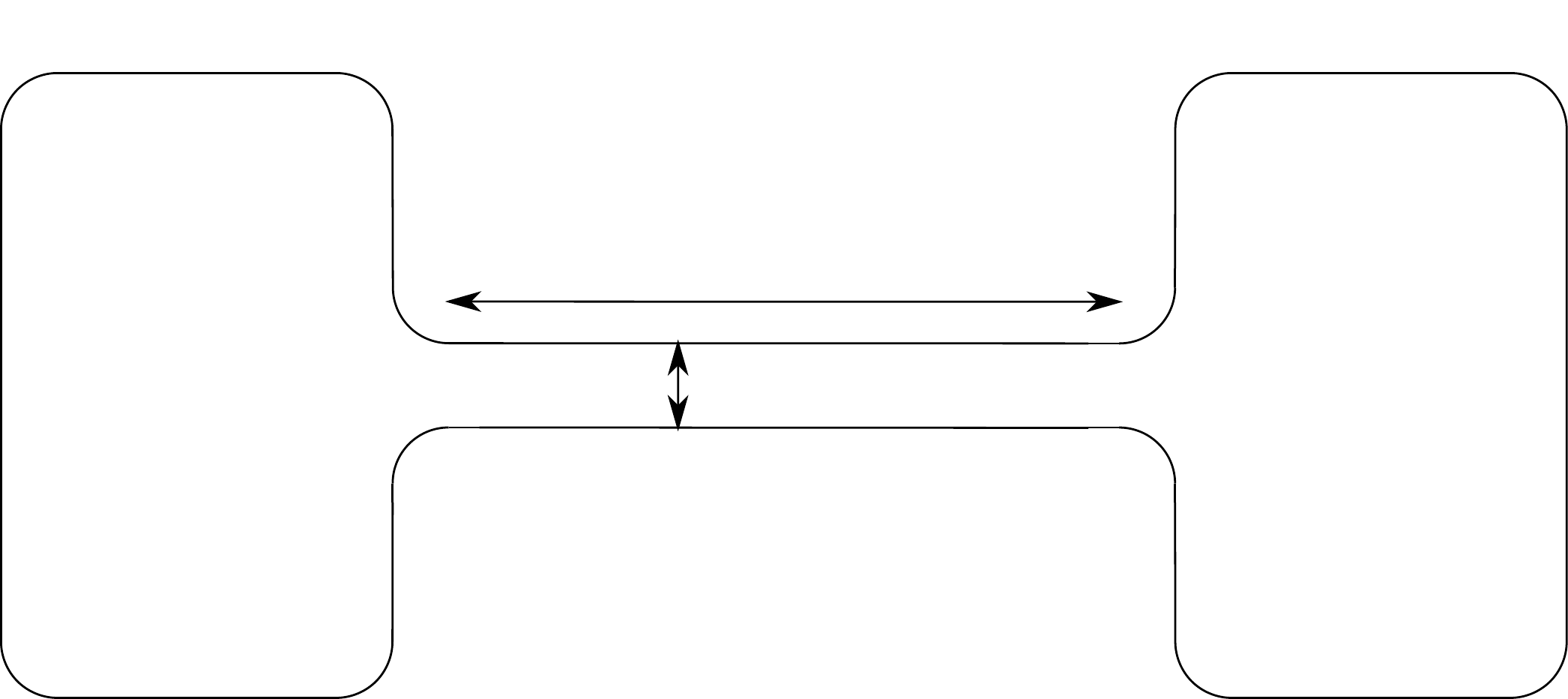\caption{The curve $\gamma_n$ is made of twelve line segments and twelve copies of a quarter of a circle. The length of the line segments may vary with $n$, while the radii of the circular parts remain fixed. If $d_n$ diverges as $n\to\infty$, it is possible to choose $\delta_n\to 0$ as $n\to\infty$ so that both elastic energy and area remain constant along the sequence of curves $\gamma_n$.}\label{fig_henrot}
\end{figure}
Fixing the length of admissible curves obviously implies their equiboundedness (up to translations), and this simplifies any compactness arguments. Without a length constraint, the situation becomes more complicated. To see this 
let us consider the shape derivative of the functional $E(\gamma)$ and formally obtain the Euler-Lagrange equation for a stationary point  under perturbations which are only area preserving (see for instance \cite{Bianchini2014}). We have that
\begin{equation}\label{eq_formal}
k''(s)+\frac{1}{2}k(s)^3=const.
\end{equation}
While it may be possible to characterize closed curves which satisfy \eqref{eq_formal}, one has to keep in mind that we cannot conclude that minima of $E$ correspond to stationary points unless we first prove compactness of an energy minimizing  sequence in the class of simple curves. Observe that the equi-boundedness of both area and elastic energy is in general not sufficient to have an equibounded (in diameter) sequence of curves as shown for instance in the example in Figure \ref{fig_henrot}. We have learned this example from a lecture of A.Henrot. Furthermore, even if we were able to exhibit a minimizing sequence with bounded diameter, we would have to exclude that pinching occurs in order to have a convergence to a simple curve. In principle, as equation \eqref{eq_formal} does not take into account the simplicity of the curve $\gamma$, one can try to gain compactness by relaxing the admissible curves to the class of all closed (not necessarily simple) curves. However, this creates the problem of properly defining the area enclosed by self intersecting curves. If the Gauss-Green formula for the area is used, negative values of the area are allowed and \eqref{isopineq} becomes meaningless. Other choices which are not consistent with the Gauss-Green formula may result in an area functional that is not differentiable with respect to smooth deformations of $\gamma$. Then condition \eqref{eq_formal} also cannot follow in a rigorous way. 

We also emphasize that in order to gain compactness, passing from $\gamma$ to its convex hull $co\,\gamma$ does not appear to be a feasible strategy since the very same example in Figure \ref{fig_henrot} provides a sequence $\gamma_n$ along which 
$A(\gamma_n)E(\gamma_n)^2$ is equi-bounded while $A(co\,\gamma_n)E(co\,\gamma_n)^2$ diverges as $n\to\infty$.

Let us mention yet another consequence of Theorem \ref{main_th} that concerns the curve shortening flow. It is well known \cite{Gage1986} that a planar simple curve shrinks to a point under curvature flow and remains simple while asymptotically converging to a circle. If the initial area is $A_0$, the extinction time is $\displaystyle T=\tfrac{A_0}{2\pi}$ and the area is $A(t)=A_0-2\pi t$ for $t\in[0,T]$. The length of the curve shrinks in such a way that
$\tfrac{d}{dt}L(t)=-2E(t)$. Therefore, we have the following:
\begin{corollary}
If a simple closed curve evolves with time according to the curvature flow, its length $L(t)$ satisfies the following inequality
$$\frac{d}{dt}L\le-2\sqrt\frac{\pi^{3}}{A_0-2\pi t},$$
for all positive $t< \frac{A_0}{2\pi}$ where $A_0$ is the area initially enclosed by the curve. Moreover, equality holds for circles.
\end{corollary}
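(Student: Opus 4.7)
The plan is to read off the corollary as a pointwise-in-time application of Theorem~\ref{main_th}, using the two evolution identities recalled in the paragraph immediately preceding the statement. First, I would fix $t \in (0, A_0/(2\pi))$ and apply Theorem~\ref{main_th} to the curve $\gamma(\cdot, t)$ obtained by running the curve shortening flow for time $t$. This gives
\[
E(\gamma(\cdot, t)) \;\geq\; \sqrt{\frac{\pi^3}{A(\gamma(\cdot,t))}}.
\]

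Next, I substitute the two identities stated in the excerpt, namely $A(\gamma(\cdot,t)) = A_0 - 2\pi t$ and $\frac{d}{dt}L(t) = -2\,E(\gamma(\cdot,t))$, to obtain
\[
\frac{d}{dt}L(t) \;=\; -2\,E(\gamma(\cdot,t)) \;\leq\; -2\sqrt{\frac{\pi^3}{A_0 - 2\pi t}},
\]
which is the desired inequality. For the equality case, I would invoke the rigidity in Theorem~\ref{main_th}: equality at a given time $t$ forces $\gamma(\cdot,t)$ to be a circle. Since a round initial curve of radius $r_0$ evolves under curvature flow as a concentric circle of radius $r(t) = \sqrt{r_0^2 - 2t}$, a direct computation confirms that the bound is saturated for every $t\in (0,T)$ along such a flow, giving the equality statement.

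The only subtlety, which turns out to be mild, is to ensure that the hypotheses of Theorem~\ref{main_th} are preserved at each positive time, i.e.\ that $\gamma(\cdot,t)$ remains a simple regular closed curve of class $W^{2,2}$. This is guaranteed by the classical Gage--Hamilton--Grayson theory recalled via \cite{Gage1986}: the flow instantaneously smooths the curve and preserves simplicity up to extinction, so no approximation argument is required. Accordingly, the corollary is essentially a one-line consequence of the main theorem, and there is no genuine obstacle to overcome beyond applying Theorem~\ref{main_th} at each time slice.
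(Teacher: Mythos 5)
Your proposal is correct and follows exactly the paper's (implicit) argument: the corollary is derived by applying Theorem~\ref{main_th} at each time slice and combining it with the identities $A(t)=A_0-2\pi t$ and $\frac{d}{dt}L=-2E$ recalled from \cite{Gage1986} in the preceding paragraph. Your added remarks on instantaneous smoothing and the explicit check for shrinking circles are consistent with, and slightly more detailed than, what the paper states.
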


\section{Preliminary results and strategy of the proof}

As a first step we observe that it suffices to prove Theorem \ref{main_th} for more regular sets.
Indeed by standard results on smooth approximation of Sobolev functions (see also \cite{Enomoto2013}) we have
\begin{proposition}\label{approx}
Let $\gamma\in W^{2,2}$ be a simple regular closed curve in $\mathbb{R}^2$ parametrized by arc length, then for all $\varepsilon>0$ there exists a closed simple curve $\gamma_\varepsilon$ which is $C^1\cap (\mbox{piecewise } C^2)$, such that the curvature changes sign a finite number of times, and such that 
$$A(\gamma)E(\gamma)^2\ge A(\gamma_\varepsilon)E(\gamma_\varepsilon)^2 -\varepsilon.$$
\end{proposition}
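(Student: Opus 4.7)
The plan combines two successive density arguments. Since $\gamma$ extends to an $L$-periodic $W^{2,2}$ function on $\mathbb{R}$, mollification yields $\gamma_\eta := \gamma * \rho_\eta \in C^\infty$ with $\gamma_\eta \to \gamma$ in $W^{2,2}$; the one-dimensional Sobolev embedding $W^{2,2}\hookrightarrow C^1$ gives uniform convergence of $\gamma_\eta$ and $\gamma_\eta'$, so $|\gamma_\eta'|\to 1$ uniformly and $\gamma_\eta$ is a simple regular closed curve for $\eta$ small (simplicity being an open condition in $C^0$). After reparametrizing by arc length, $A(\gamma_\eta)\to A(\gamma)$ by Green's formula and uniform convergence, while $E(\gamma_\eta)\to E(\gamma)$ by $L^2$-convergence of curvatures combined with $|\gamma_\eta'|\to 1$.

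Reducing now to the case where $\gamma$ is smooth, let $\theta$ be a lift of its tangent angle and set $\psi(s) := \theta(s) - 2\pi s/L$, a smooth $L$-periodic function. Approximating $\psi$ in $C^1(\mathbb{R}/L\mathbb{Z})$ by a trigonometric polynomial $\psi_\delta$, the new curvature $k_\delta := \psi_\delta' + 2\pi/L$ is itself a trigonometric polynomial, hence real-analytic on $[0,L]$; having nonzero mean $2\pi/L$ it is not identically zero and thus has only finitely many zeros, so finitely many sign changes. The integrated curve $\gamma_\delta(s) := p + \int_0^s(\cos\theta_\delta,\sin\theta_\delta)\,d\sigma$ with $\theta_\delta := \psi_\delta + 2\pi s/L$ is $C^\infty$ with automatic total turning $2\pi$, but may fail the translational closure $\gamma_\delta(L) = p$.

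The main obstacle is to close the curve. I would consider the two-parameter family $\theta_\delta^{(\alpha,\beta)} := \theta_\delta + \alpha\sin\theta_\delta + \beta\cos\theta_\delta$, whose curvature $k_\delta(1+\alpha\cos\theta_\delta-\beta\sin\theta_\delta)$ is still real-analytic, with the second factor staying positive for $(\alpha,\beta)$ small so that sign changes come only from $k_\delta$ and remain finite. The Jacobian of the closure map $(\alpha,\beta)\mapsto\int_0^L(\cos\theta_\delta^{(\alpha,\beta)},\sin\theta_\delta^{(\alpha,\beta)})\,ds$ at $(\alpha,\beta)=(0,0)$ has determinant
\[
\Big(\int_0^L\sin\theta_\delta\cos\theta_\delta\,ds\Big)^2 - \int_0^L\sin^2\theta_\delta\,ds\,\int_0^L\cos^2\theta_\delta\,ds,
\]
which is strictly negative by strict Cauchy--Schwarz: $\sin\theta_\delta$ and $\cos\theta_\delta$ cannot be proportional, as otherwise $\theta_\delta$ would be constant, contradicting the total turning of $2\pi$. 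The implicit function theorem then yields small closing parameters $(\alpha,\beta)$. The resulting $\gamma_\varepsilon := \gamma_\delta^{(\alpha,\beta)}$ is real-analytic (in particular $C^1\cap(\text{piecewise }C^2)$), simple by $C^0$-closeness to $\gamma$, closed by construction, and has finitely many curvature sign changes; continuity of $A$ and $E$ in the relevant topologies yields $|A(\gamma_\varepsilon)E(\gamma_\varepsilon)^2 - A(\gamma)E(\gamma)^2|<\varepsilon$ for $\eta,\delta$ small.
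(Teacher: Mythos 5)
Your argument is correct in substance, and it is worth noting that the paper does not actually prove Proposition \ref{approx}: it only invokes ``standard results on smooth approximation of Sobolev functions'' with a pointer to Enomoto--Okura, whose approach is approximation by piecewise circular curves. Your route is genuinely different and more self-contained: mollification to reduce to the smooth case, then trigonometric-polynomial approximation of the tangent angle to force real-analytic curvature (hence finitely many sign changes), then a two-parameter deformation $\theta_\delta+\alpha\sin\theta_\delta+\beta\cos\theta_\delta$ whose closure map has nondegenerate Jacobian by strict Cauchy--Schwarz, so that the implicit function theorem restores $\gamma_\varepsilon(L)=\gamma_\varepsilon(0)$ without destroying analyticity of the curvature. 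This buys an explicit construction where the paper offers only a citation, and the closure trick is the genuinely nontrivial ingredient that generic smoothing arguments tend to gloss over. Two small points should be tightened. First, simplicity of a regular closed curve is \emph{not} an open condition in $C^0$ (a $C^0$-small wiggle can create a self-intersection); it is open in $C^1$ among regular curves, which is what you actually have from $W^{2,2}\hookrightarrow C^1$ and from the uniform convergence $\theta_\delta^{(\alpha,\beta)}\to\theta$, so the conclusion stands but the stated reason should be corrected. Second, to extract closing parameters $(\alpha,\beta)\to(0,0)$ as $\delta\to 0$ you need the inverse function theorem to apply on a ball of radius uniform in $\delta$: this holds because $\theta_\delta\to\theta$ in $C^1$ forces the Jacobian determinant to converge to the (strictly negative) one computed for $\theta$ itself, while the second derivatives of the closure map are uniformly bounded by $L$; a sentence to this effect would close the argument.
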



Before giving further details let us consider two points $p$ and $q$ on $\gamma$ so that for the given
orientation $p$ precedes $q$. The subset of $\gamma$ connecting these two points we denote as the arc $\wideparen{pq}$.
When $\gamma(0)=\gamma(L)$ and the curve is positively oriented if we travel along $\gamma$ in the direction of the tangent we cover the path counterclockwise periodically and we can define two arcs connecting $p$ and $q$. The arc $\wideparen{pq}$ where $p$ precedes $q$ and the arc $\wideparen{qp}$ where $q$ precedes $p$. 
The symbol $\overline{pq}$ on the other hand will represent chord or the straight line segment with endpoints $p$ and $q$.  Notice that $\overline{pq}=\overline{qp}$ while $\wideparen{pq}\ne\wideparen{qp}$. 

Now we can introduce the notion of an arc  \emph{holding a convex set}.

\begin{definition}
On a given simple closed curve $\gamma$ in $\mathbb{R}^2$, with $\gamma$ parametrized by arc length of class $C^{1}$ piecewise $C^2$, we say that the arc $\wideparen{pq}$ {\emph{holds an open convex set $K$}} (see Figure \ref{fig_holds}) if:

\noindent
$\bullet$ 
the convex set $K\subseteq \Omega_\gamma$ is bounded by the segment $\overline{pq}$ and the arc $\wideparen{pq}$, i.e. $\partial K=\overline{pq}\cup \wideparen{pq}$
\noindent
$\bullet$ the tangents to $\gamma$ at $p$ and $q$ are parallel

\end{definition}

\begin{figure}
\def\svgwidth{3.in}
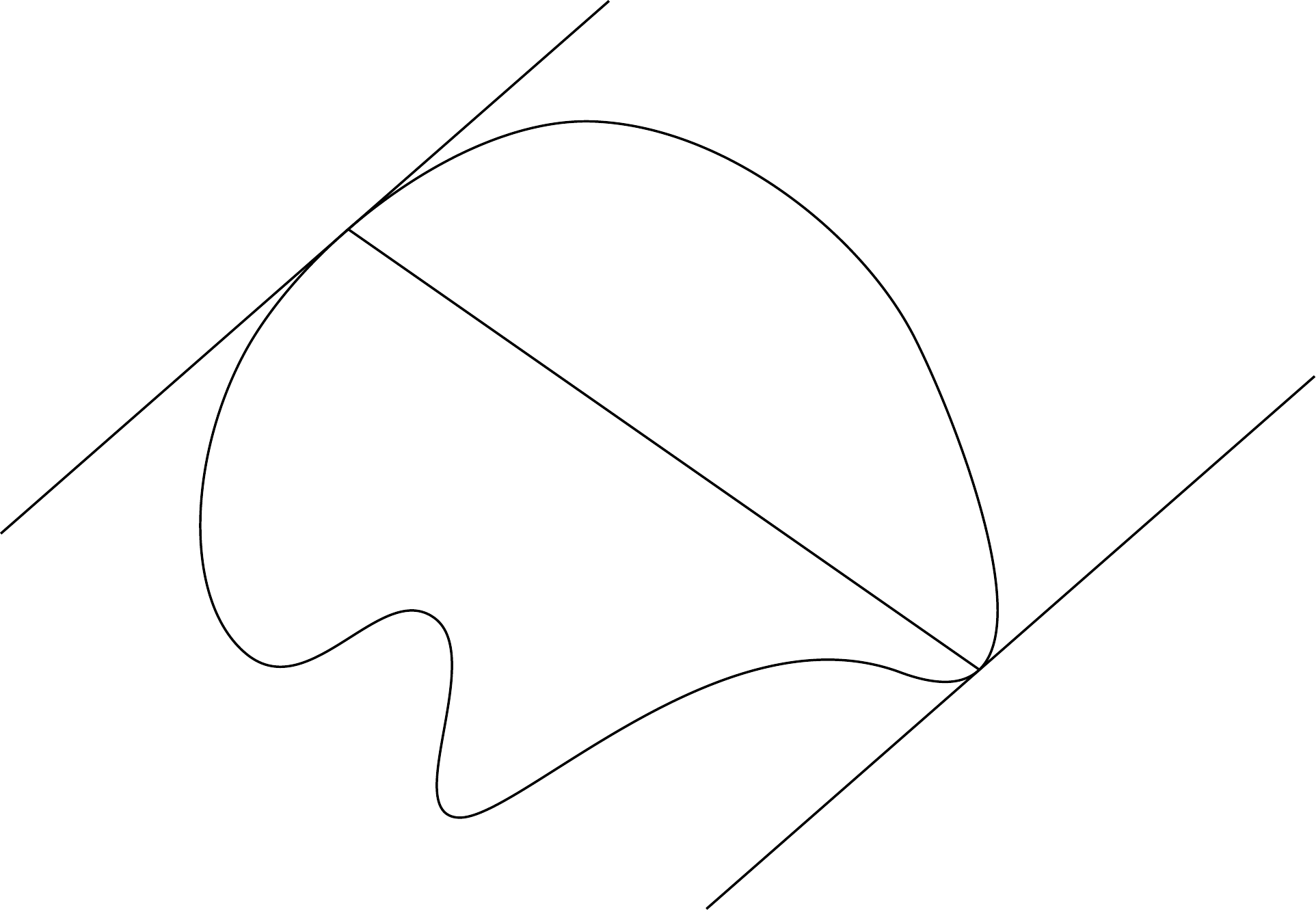\caption{A curve $\gamma$ such that the arc $\wideparen{pq}$ holds the convex set $K$.}\label{fig_holds}
\end{figure}


\noindent 
The following fundamental proposition provides a significant improvement of Proposition \ref{pr_gage}.

\begin{proposition}\label{two_convex}
Let $\gamma$ be a simple regular closed curve in $\mathbb{R}^2$, counterclockwise parametrized by arc length, and of class $C^{1}$ piecewise $C^2$. If $\gamma$ contains two arcs holding two disjoint convex sets, then
$$A(\gamma)E(\gamma)^2\ge \pi^3.$$
\end{proposition}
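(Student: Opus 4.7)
My strategy is to prove the per-arc bound $A_i\,E_i^{2}\ge\pi^3/8$ for $i=1,2$, where $A_i:=A(K_i)$ and $E_i:=\tfrac12\int_{\wideparen{p_iq_i}}k(s)^2\,ds$, and then to combine the two estimates using that $K_1$ and $K_2$ are disjoint subsets of $\Omega_\gamma$.

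For the per-arc bound I would combine two classical inequalities. First, since $\gamma$ is positively oriented and $K_i$ sits to the left of the arc, the curvature along $\wideparen{p_iq_i}$ is nonnegative; the parallel-tangent hypothesis then forces the two tangent vectors to be antiparallel (the only alternative being the degenerate full-loop case $p_i=q_i$, which we discard), so $\int_{\wideparen{p_iq_i}}k\,ds=\pi$. Cauchy--Schwarz yields
\[
\pi^2 = \Bigl(\int_{\wideparen{p_iq_i}} k\,ds\Bigr)^{2} \le L_i \int_{\wideparen{p_iq_i}} k^2\,ds = 2L_iE_i,
\]
hence $E_i\ge \pi^2/(2L_i)$, where $L_i$ is the length of the arc. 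Next I would reflect $\wideparen{p_iq_i}$ across the line supporting its chord $\overline{p_iq_i}$: since $K_i$ is convex it lies entirely on one side of this line, so the reflected arc lies on the opposite side, and the two arcs together form a simple closed curve of total length $2L_i$ enclosing the symmetric region $K_i\cup R(K_i)$ of area $2A_i$. The classical isoperimetric inequality then gives $(2L_i)^2\ge 4\pi(2A_i)$, i.e., $L_i\ge\sqrt{2\pi A_i}$. Plugging this back into the Cauchy--Schwarz estimate produces $E_i\ge \pi^{3/2}/(2\sqrt{2A_i})$, which is the desired arc-level bound.

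To finish, since $k^2\ge 0$ pointwise on $\gamma$,
\[
E(\gamma)\ge E_1+E_2 \ge \frac{\pi^{3/2}}{2\sqrt2}\Bigl(\frac{1}{\sqrt{A_1}}+\frac{1}{\sqrt{A_2}}\Bigr),
\]
while disjointness of $K_1$ and $K_2$ in $\Omega_\gamma$ gives $A(\gamma)\ge A_1+A_2$. Two applications of AM--GM, namely $A_1+A_2\ge 2\sqrt{A_1A_2}$ and $(1/\sqrt{A_1}+1/\sqrt{A_2})^{2}\ge 4/\sqrt{A_1A_2}$, multiply to $(A_1+A_2)(1/\sqrt{A_1}+1/\sqrt{A_2})^{2}\ge 8$, so
\[
A(\gamma)\,E(\gamma)^{2}\ge \frac{\pi^3}{8}\cdot 8 = \pi^3.
\]
Equality throughout forces $A_1=A_2$ and both arcs to be semicircles, so that the two half-disks fit together into a circle, consistent with the main theorem.

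The main obstacle is the lower bound on $L_i$: one cannot apply Gage's inequality (Proposition~\ref{pr_gage}) directly to $\partial K_i$, because this boundary has corners at $p_i$ and $q_i$; rounding them causes $\int k^2$ to blow up, while leaving them in place makes the bound $2E\ge\pi L/A$ fail (the smooth arc alone does not carry enough curvature energy to compensate for the turning concentrated at the corners). The reflection-doubling trick eliminates the corners altogether and lets the classical isoperimetric inequality supply the missing estimate.
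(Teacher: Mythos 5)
Your overall architecture is the same as the paper's: establish the per-arc bound $A_iE_i^2\ge\pi^3/8$ for each arc holding a convex set, then combine the two bounds with $A(\gamma)\ge A_1+A_2$, $E(\gamma)\ge E_1+E_2$ and AM--GM. The combination step is correct. The gap is in the derivation of the per-arc bound: from Cauchy--Schwarz you get $E_i\ge\pi^2/(2L_i)$, a lower bound that \emph{deteriorates} as $L_i$ grows, so to convert it into a bound in terms of $A_i$ you need an \emph{upper} bound on $L_i$ in terms of $A_i$. The isoperimetric inequality supplies a \emph{lower} bound $L_i\ge\sqrt{2\pi A_i}$; substituting it into $\pi^2/(2L_i)$ bounds the quantity $\pi^2/(2L_i)$ from above, not $E_i$ from below. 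The implication ``$E\ge\pi^2/(2L)$ and $L\ge\sqrt{2\pi A}$ imply $E\ge\pi^{3/2}/(2\sqrt{2A})$'' is false as a matter of logic: take $E=1$, $L=100$, $A=1$; both hypotheses hold but the claimed conclusion ($E\gtrsim1.97$) fails. Moreover the route cannot be repaired, since no reverse isoperimetric bound $L_i\le C\sqrt{A_i}$ holds (long thin convex sets have small area and long boundary); for such sets Cauchy--Schwarz alone is far too weak, and the curvature concentration at the ``tip'' that actually saves the inequality is exactly what Gage's inequality $2E\ge\pi L/A$ for convex curves captures. That inequality, unlike the classical isoperimetric one, is genuinely nontrivial and is the indispensable input here.

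You correctly identified the obstacle to applying Proposition~\ref{pr_gage} directly (corners at $p_i$, $q_i$), but the fix is not reflection across the chord: it is the doubling by the point reflection (rotation by $\pi$) about the midpoint of $\overline{p_iq_i}$, which is what the paper does. Because the tangents at $p_i$ and $q_i$ are parallel, the arc and its rotated copy fit together into a $C^1$, piecewise $C^2$, \emph{convex} closed curve with no corners, enclosing area $2A_i$ and carrying elastic energy $2E_i$. Proposition~\ref{pr_gage} applied to this curve gives $2A_i\,(2E_i)^2\ge\pi^3$, which is exactly the per-arc bound $A_iE_i^2\ge\pi^3/8$ you were after; the rest of your argument then goes through unchanged.
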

\begin{proof}
Let $\wideparen{pq}$ and $\wideparen{p'q'}$ be such arcs (see for example Figure \ref{fig_osso}). Let us consider the set $\Omega$ whose boundary is made of the arc $\wideparen{pq}$ and a copy of the same arc rotated by the angle $\pi$ around the center of the segment $\overline{pq}$. The fact that the tangents on $p$ and $q$ are parallel guarantees that the set  $\Omega$ is convex. Then by Proposition \ref{pr_gage} we have that
$$Area(\Omega) \left(2E(\wideparen{pq})\right)^2 \ge \pi^3.$$
Analogously for the arc $\wideparen{p'q'}$ we can construct a convex set $\Omega'$ so that $$Area(\Omega') \left(2E(\wideparen{p'q'})\right)^2 \ge \pi^3.$$
We observe now that $Area(\Omega)+Area(\Omega')\le 2A(\gamma)$ and $E(\wideparen{pq})+E(\wideparen{p'q'})\le E(\gamma)$.
Setting $E(\wideparen{pq})=:t^{-1}$ and $E(\wideparen{p'q'})=:t'^{-1}$ and applying the elementary inequality between arithmetic and geometric means leads to
$$A(\gamma)E(\gamma)^2\ge \frac{\pi^3}{8}\left(t^2+t'^2\right)\left(\frac{1}{t}+\frac{1}{t'}\right)^2\ge {\pi^3}\left(\frac{t^2+t'^2}{2t t'}\right) \ge \pi^3.$$
\end{proof}

\begin{figure}
\def\svgwidth{3.in}
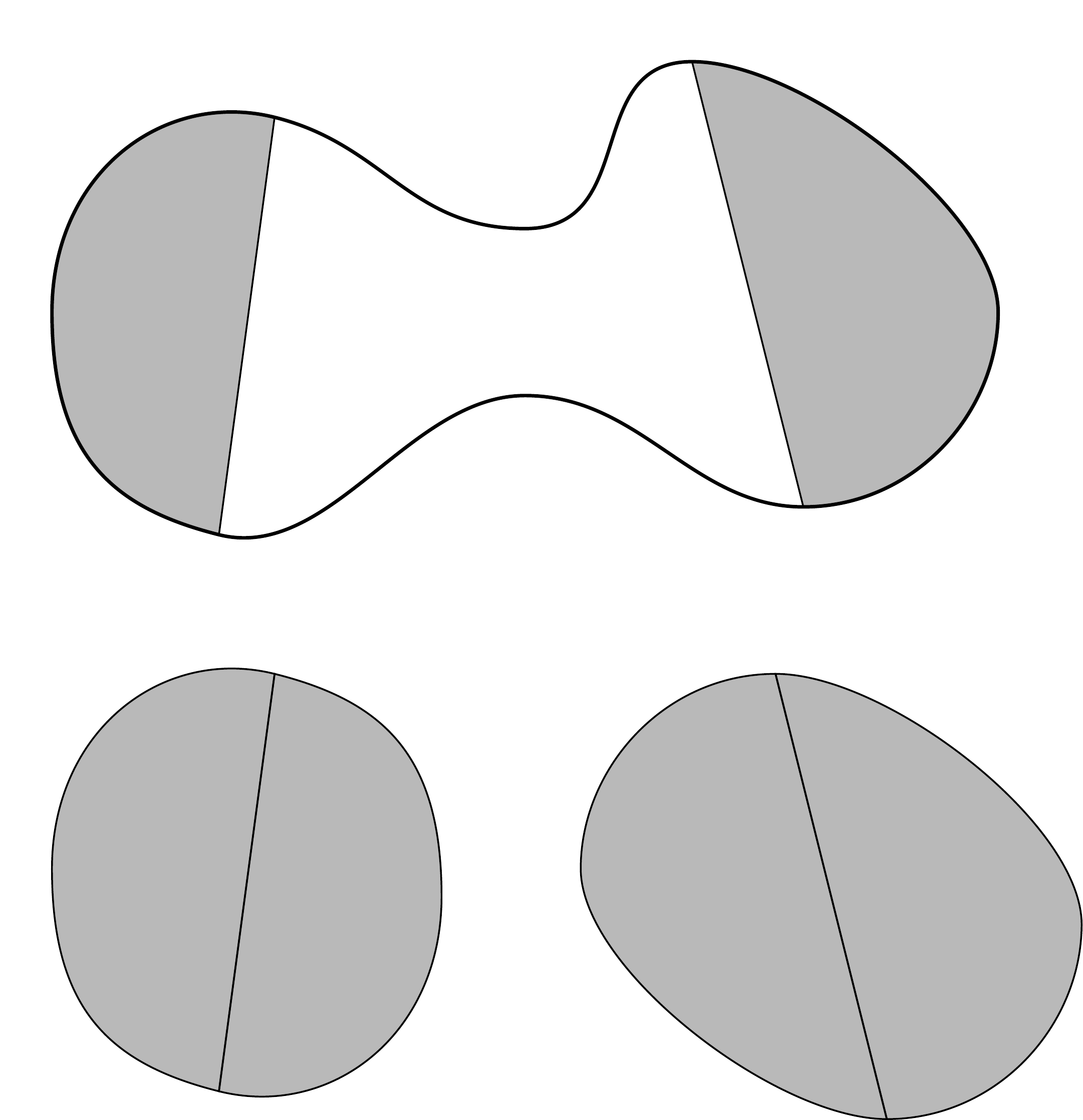\caption{A curve $\gamma$ satisfying hypotheses of Proposition \ref{two_convex}, and the two convex sets $\Omega$ and $\Omega'$ used in the proof.}\label{fig_osso}
\end{figure}

The notion of an arc holding a convex set can be extended to more general curves which are not everywhere smooth. The most important fact is that at least the arcs involved have to be of class $C^1$ and piecewise $C^2$. Therefore from the proof of the previous proposition it is possible to deduce:

\begin{corollary}\label{c_two_convex}
If two sufficiently smooth arcs, say $\wideparen{pq}$ and $\wideparen{p'q'}$, not necessarily belonging to the same closed curve, hold two disjoint convex sets $\Omega$ and $\Omega'$, then
$$\left(A(\Omega\cup\Omega')\right)\left(E(\wideparen{pq})+E(\wideparen{p'q'})\right)^2\ge \pi^3.$$
\end{corollary}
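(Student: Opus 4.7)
The plan is to mimic, almost verbatim, the argument of Proposition \ref{two_convex}; indeed that proof never genuinely used the fact that the two arcs lived on a common curve $\gamma$. The only role played by $\gamma$ there was to supply the two inequalities $A(\Omega)+A(\Omega')\le 2A(\gamma)$ (where, in the notation internal to that proof, $\Omega,\Omega'$ denote the \emph{doubled} convex regions rather than the held ones) and $E(\wideparen{pq})+E(\wideparen{p'q'})\le E(\gamma)$. In our corollary the former is replaced by the exact identity $A(\Omega)+A(\Omega')=A(\Omega\cup\Omega')$, available because $\Omega$ and $\Omega'$ are assumed disjoint, while the latter becomes a tautology, since the sum of arc energies is exactly what appears on the right-hand side of the claim.

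Concretely, the first step is to perform the doubling construction separately on each arc: reflect $\wideparen{pq}$ through the midpoint of its chord $\overline{pq}$ and join the reflected copy to the original to obtain a closed curve enclosing a region $\tilde\Omega$; do the same for $\wideparen{p'q'}$ to obtain $\tilde\Omega'$. The parallel-tangent condition built into the definition of \emph{holding a convex set} guarantees that the doubled curves are $C^1$ piecewise $C^2$, while the convexity of $\Omega$ combined with the centrally symmetric doubling guarantees the convexity of $\tilde\Omega$, and similarly for $\tilde\Omega'$. Since $A(\tilde\Omega)=2A(\Omega)$ and the total squared-curvature integral along $\partial\tilde\Omega$ is $2E(\wideparen{pq})$, Gage's inequality (Proposition \ref{pr_gage}) applied to $\tilde\Omega$ and $\tilde\Omega'$ yields
$$8\,A(\Omega)\,E(\wideparen{pq})^2 \ge \pi^3 \quad\text{and}\quad 8\,A(\Omega')\,E(\wideparen{p'q'})^2 \ge \pi^3.$$

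Finally, setting $t = 1/E(\wideparen{pq})$ and $t' = 1/E(\wideparen{p'q'})$ and using disjointness to add the two area estimates, the left-hand side of the target inequality is bounded below by
$$\frac{\pi^3}{8}\,(t^2+t'^2)\Bigl(\frac{1}{t}+\frac{1}{t'}\Bigr)^2 = \frac{\pi^3\,(t^2+t'^2)(t+t')^2}{8\,(tt')^2},$$
and two applications of AM--GM ($t^2+t'^2\ge 2tt'$ and $(t+t')^2\ge 4tt'$) reduce this to $\pi^3$, just as at the end of the proof of Proposition \ref{two_convex}. I foresee no substantive obstacle: the only verification needed beyond that already carried out for Proposition \ref{two_convex} is that the doubled regions $\tilde\Omega,\tilde\Omega'$ are admissible in Gage's inequality, i.e., convex sets bounded by a $C^1$ piecewise $C^2$ curve, and this is immediate from the definition of holding a convex set together with the assumed smoothness of the arcs.
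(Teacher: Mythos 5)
Your proposal is correct and follows exactly the route the paper intends: the corollary is stated there as a direct consequence of the proof of Proposition \ref{two_convex}, and your doubling of each arc, application of Proposition \ref{pr_gage} to each doubled convex region, and the final AM--GM computation reproduce that argument verbatim, with the only (correctly identified) changes being that disjointness turns the area estimate into the identity $A(\Omega)+A(\Omega')=A(\Omega\cup\Omega')$ and the energy comparison becomes a tautology. No gaps.
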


The importance of the last corollary will be more clear once all the steps in the proof  of Theorem \ref{main_th}  have been explained. The proof relies on an algorithm which is quite branched, therefore let us briefly outline the global strategy before going into details. 

We will start with a smooth curve $\gamma$, and the first important step is the construction of what we will denote by Procedure 1 and Procedure 2. They are two different maneuvers to continuously modify parts of the curve $\gamma$. During such reshaping of $\gamma$ the area of the set $\Omega_\gamma$ continuously decreases and the elastic energy of $\gamma$ does not increase. Procedure 1 deals with convex arcs, those on which the curvature is positive, Procedure 2 deals with concave arcs, those on which the curvature is negative. 

In particular Procedure 1 can be applied to those convex arcs 
which, when traveling from one end point to another (by end points we mean inflection points), map the normal to $\gamma$ into a subset of $\mathbb{S}^1$ of measure smaller than $\pi$.

\begin{figure}
\begin{minipage}{0.5\textwidth}
\begin{center}
\def\svgwidth{2.in}
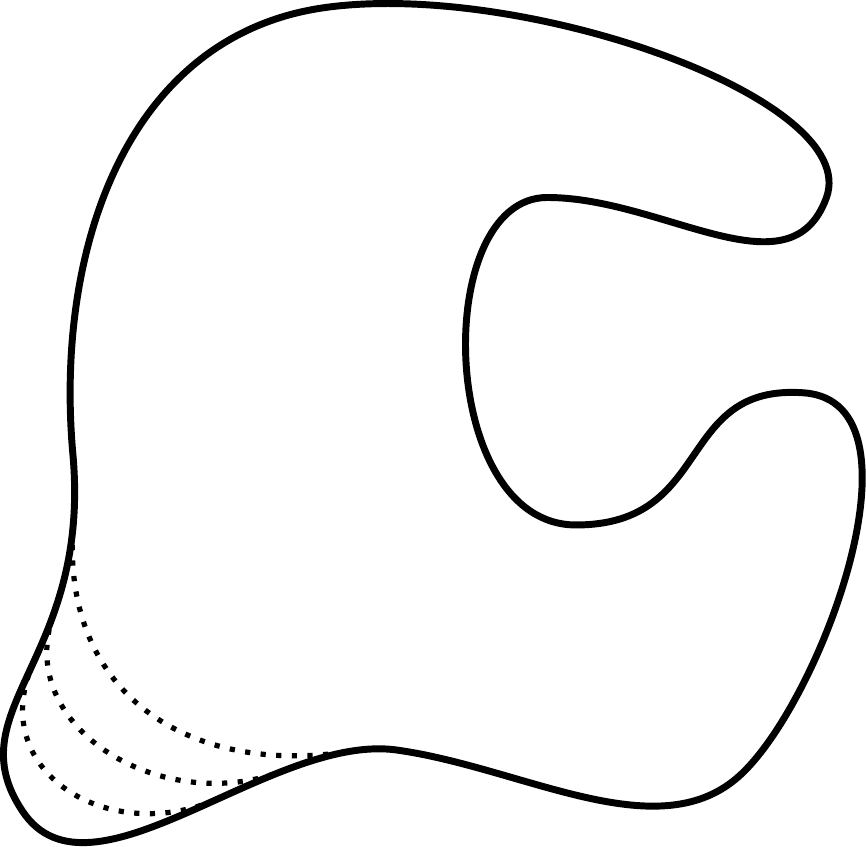\caption{Procedure 1.}\label{fig_pro1}
\end{center}
\end{minipage}%
\begin{minipage}{0.5\textwidth}
\begin{center}
\def\svgwidth{2.in}
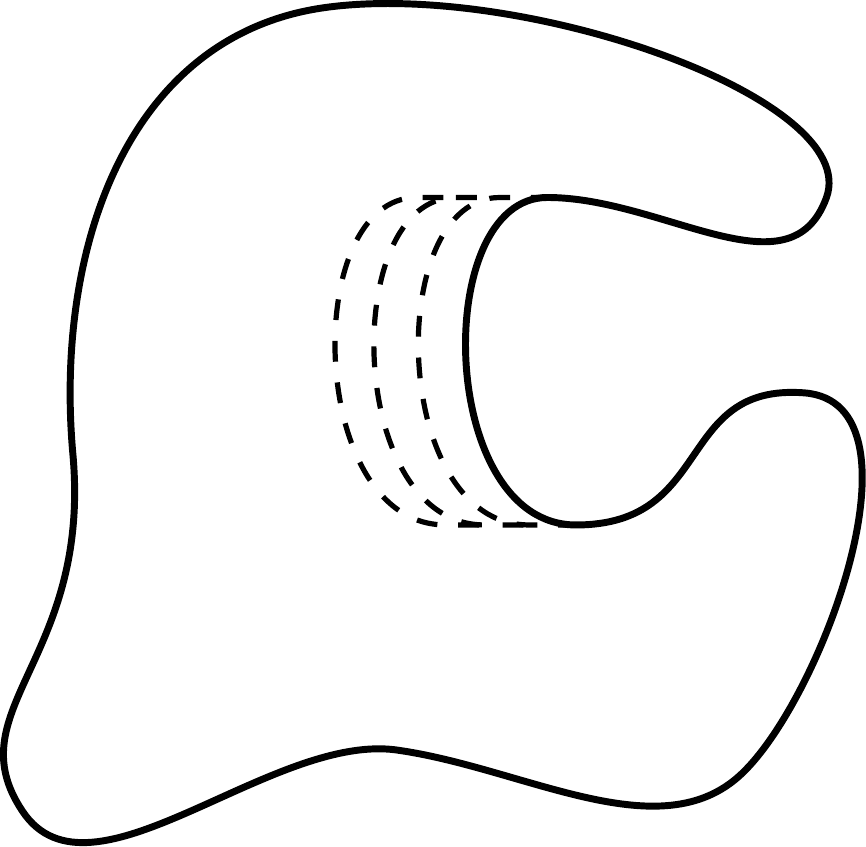\caption{Procedure 2.}\label{fig_pro2}
\end{center}
\end{minipage}
\end{figure}

Procedure 2 on the other hand, can be applied to those concave arcs 
which, when traveling from one end point to another, map the normal to $\gamma$ into a subset of $\mathbb{S}^1$ of measure larger than $\pi$.

Procedure 1 and 2 do not affect the smoothness of $\gamma$. They both modify $\Omega_\gamma$ so that the set shrinks. The perturbation is in the direction of the inner normal (as schematically depicted in Figures \ref{fig_pro1} and \ref{fig_pro2}). The perturbation can be iterated to all arcs fulfilling these requirements, we will apply it arc by arc, as long as the curve remains simple. And here is where things become a bit more complicated. 

In fact, if the curve remains simple along all the reshaping procedures, in the end, after a finite number of iterations,
 we get a curve $\gamma$ which is $C^1$ piecewise $C^2$. Moreover, by construction this curve has all convex arcs mapping the normal to $\gamma$ into a subset of $\mathbb{S}^1$ of measure larger than $\pi$, and all concave arcs mapping the normal to $\gamma$ into a subset of $\mathbb{S}^1$ of measure smaller than $\pi$.
The last paragraph of the proof is devoted to showing that any curve $\gamma$ with these features automatically satisfies the assumptions of Proposition \ref{two_convex}.

On the other hand, by applying Procedure 1 or Procedure 2 the curve $\gamma$ may pinch somewhere, possibly even in infinitely many points. Let us denote by $\Gamma$ the piece of arc that we are reshaping when the first pinching occurs. 
All pinching points belongs obviously to $\Gamma$ (see Figure \ref{fig_first_pinching}). Since, as it will be clear, the arc itself by construction remains simple throughout the deformation, this means that $\Gamma$ is somewhere tangent to some other arc of $\gamma$. We can find on $\Gamma$ two pinching points $p_1$ and $p_2$, where $p_1$ precedes all other pinching points on $\Gamma$, $p_2$ follows all other pinching points on $\Gamma$ (the construction is obviously simplified if pinching occurs just in one point and $p_1$ happens to coincide with $p_2$).
By elementary topological considerations there exists an arc on the curve $\gamma$ with both endpoints on $p_1$, that is simple closed smooth everywhere except in $p_1$ where a cusp appears. At the same time there exists another arc on the curve $\gamma$ with both endpoints on $p_2$, that is simple closed smooth everywhere except in $p_2$ where another cusp appears. We denote these two arcs by $\gamma_1$ and $\gamma_2$, because they are two disjoint closed curves, smooth everywhere but on the cusp. 

\begin{figure}
\def\svgwidth{4.in}
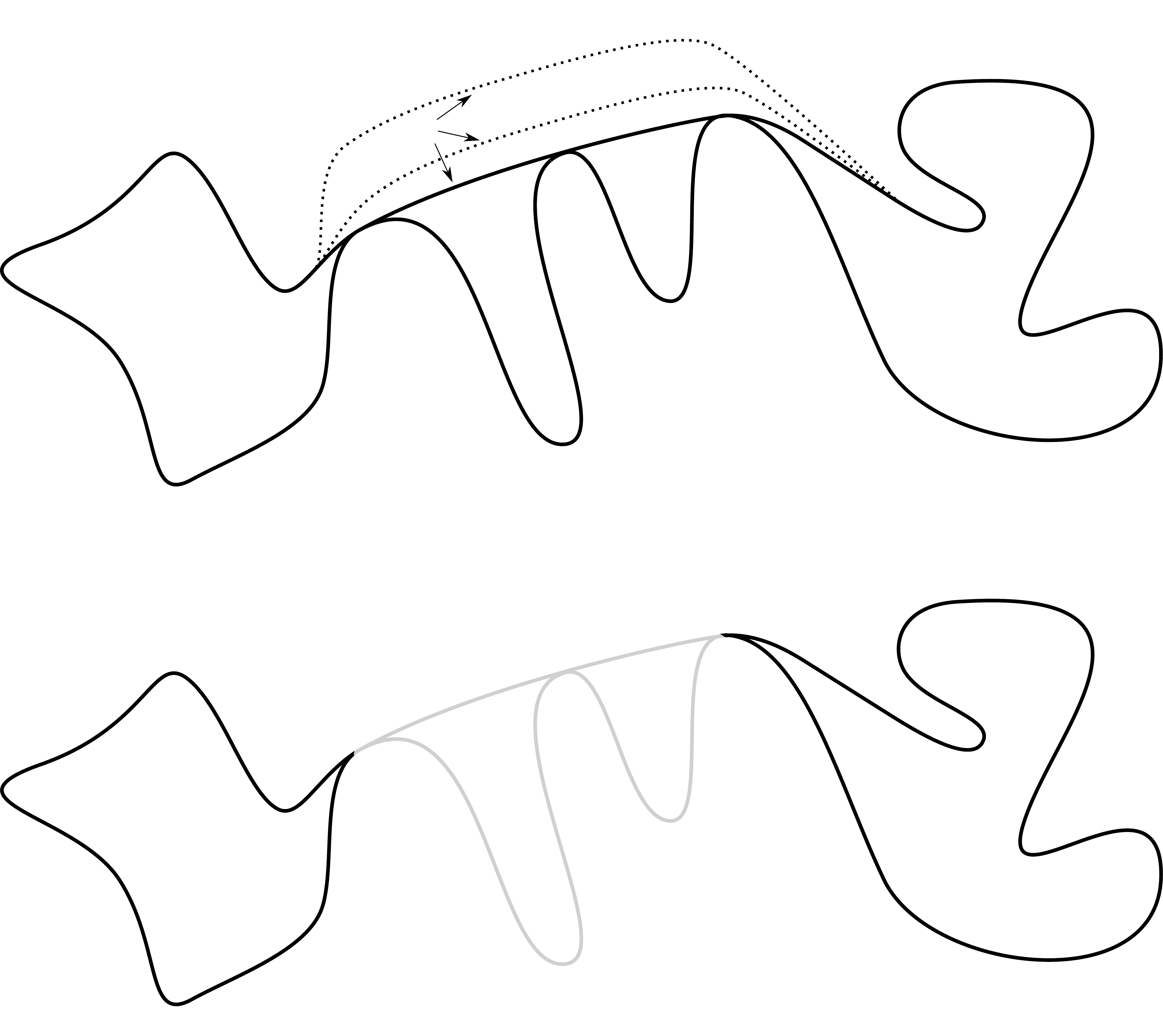\caption{When $\gamma$ pinches somewhere we can identify two arcs which parametrized by arc length are two simple curves $\gamma_1$ and $\gamma_2$ smooth everywhere but on $p_1$ and $p_2$ where each of them has a cusp.}\label{fig_first_pinching}
\end{figure}

The two curves $\gamma_1$ and $\gamma_2$ bound in any case two disjoint sets $\Omega_{\gamma_1}$ and $\Omega_{\gamma_2}$ which overall have smaller area than $\Omega_\gamma$. The curvature is not defined on the cusp but it is an $L^2$ function away from the singularity. So the elastic energy can be computed on both curves and the sum of $E(\gamma_1)$ and $E(\gamma_2)$ is clearly less then $E(\gamma)$. 

At this stage we can still apply Procedure 1 and Procedure 2 to arcs of $\gamma_1$. We just need to stay away from the cusp. And we can still go through a pinching (see Figure \ref{fig_second_pinching}). If a pinching occurs for instance to $\gamma_1$, we can repeat the same argument as before and we can find a pinching point $\tilde p_1$ and arc $\tilde \gamma_1$ on the curve $\gamma_1$ with both endpoints on $\tilde p_1$, that is simple closed smooth everywhere except in $\tilde p_1$ where a cusp appears. Therefore we are back again to a curve $\tilde \gamma_1$ belonging to the same class as $\gamma_1$, i.e.: $C^1$ everywhere but in the cusp, piecewise $C^2$. Both area and elastic energy of $\tilde \gamma_1$ are smaller than those of $\gamma_1$. We rename $\tilde \gamma_1$ as $\gamma_1$ and the procedure continues. After a finite number of iteration the curve $\gamma_1$ has become a curve $C^1$ everywhere but in the cusp, piecewise $C^2$, and by construction has all convex arcs mapping the normal to $\gamma$ into a subset of $\mathbb{S}^1$ of measure larger than $\pi$, and all concave arcs mapping the normal to $\gamma$ into a subset of $\mathbb{S}^1$ of measure smaller than $\pi$. The very same things happens to $\gamma_2$.
\begin{figure}
\def\svgwidth{3.in}
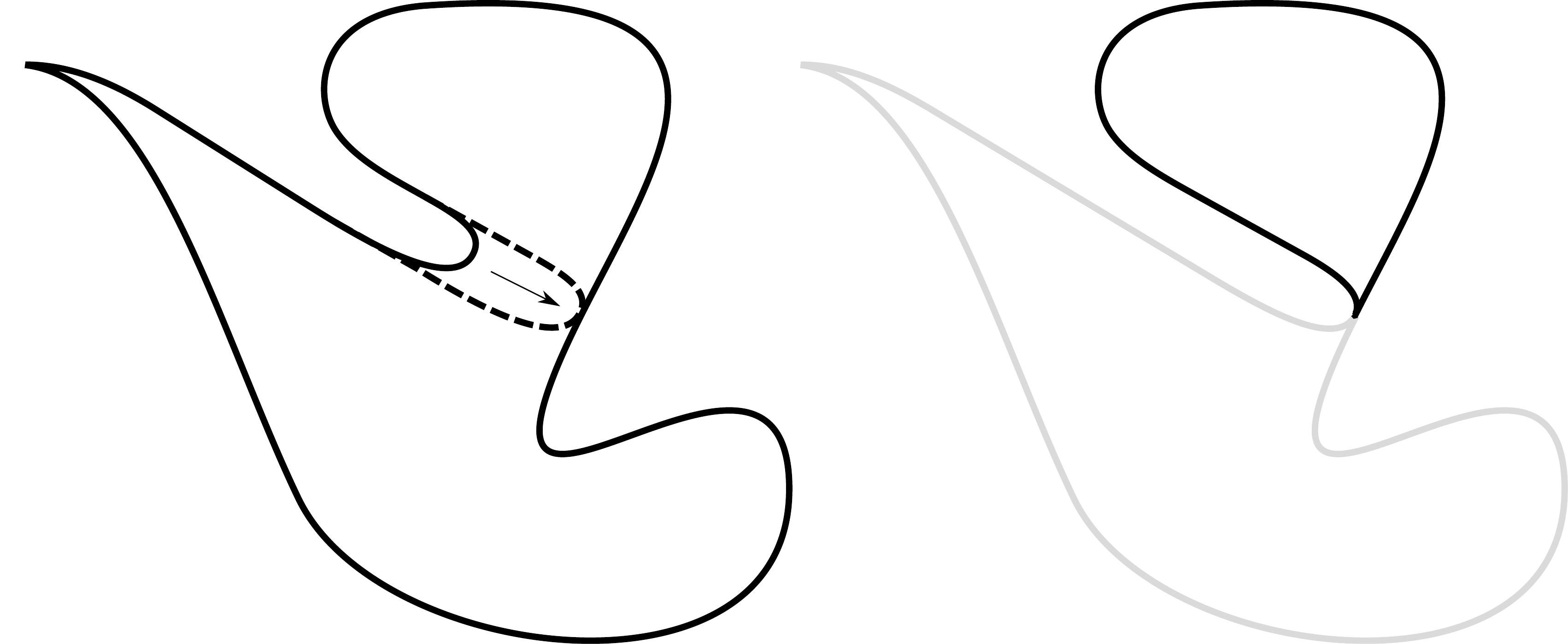\caption{When $\gamma_1$ pinches we can identify on $\gamma_1$ an arc which parametrized by arc length is a simple curve $\tilde\gamma_1$ smooth everywhere but on one point $\tilde p_1$ where it has a cusp.}\label{fig_second_pinching}
\end{figure}
 
In the last paragraph we show that if $\gamma_1$ and $\gamma_2$ have such characteristics, then each one of them has at least one arc holding a convex set, and Corollary \ref{c_two_convex} completes the proof of Theorem \ref{main_th}.

\section{Notation and preliminaries}

In view of Proposition \ref{approx}, from now on, unless otherwise stated, we consider only regular curves $\gamma$ in $\mathbb{R}^2$:
\begin{enumerate}[label=(\emph{\alph*})]
\item which are simple, and for some $L>0$, parametrized by their arc length $s\in[0,L]$;
\item which are of class $C^1([0,L])$ and piecewise $C^2([0,L])$;
\item whose signed curvature changes sign only a finite number of times.
\end{enumerate}

\begin{definition}[Closed smooth curves] We denote by $\mathcal{K}$ the set of curves which satisfy \emph{({\emph a}), ({\emph b}), ({\emph c})}, which are closed (in the sense that $\gamma(0)=\gamma(L)$ and $\gamma'(0)=\gamma'(L)$) and which are positively oriented. 
\end{definition}

\begin{definition}[External cusp] We say that a curve $\gamma$ which satisfies  \emph{({\emph a}), ({\emph b}), ({\emph c})} has an external cusp if  $\gamma(0)=\gamma(L)$, $\gamma'(0)=-\gamma'(L)$, and $\gamma'(L)$ points outside the set $\Omega_\gamma$. We denote by $\mathcal{C}$ the set of such curves upon additionally assuming that they are positively oriented.
\end{definition}

By convention we always parametrize curves $\gamma\in \mathcal{C}$ so that the cusp corresponds to $\gamma(0)=\gamma(L)$. In order to avoid misunderstandings, let us clarify that the elastic energy of a curve $\gamma$ in $\mathcal{C}$ will be
$$\int_0^L k^2(s) ds=\lim_{\varepsilon\downarrow 0}\int_\varepsilon^{L-\varepsilon} k^2(s) ds.$$
Somehow, even if the two endpoints coincide, for what concerns the elastic energy we treat $\gamma\in\mathcal{C}$ as an open curve.

We say that an arc $\wideparen{pq}$ of $\gamma$ is convex if its curvature is nonnegative and not identically vanishing. Conversely we say that $\wideparen{pq}$ of $\gamma$ is concave if the curvature is nonpositive and not identically vanishing. In both cases if the curve belongs to $\mathcal{C}$ then the arc $\wideparen{pq}$ (convex or concave) is assumed not to include the cusp in $\gamma(0)$.

We observe that the arc $\wideparen{pq}\subset\gamma$ can hold a convex set, if and only if the following three conditions hold: 
\begin{enumerate}
\item[(i)] the arc $\wideparen{pq}$ is convex,
\item[(ii)] the total curvature on $\wideparen{pq}$ is $\pi$, i.e.:
$$\int\limits_{\wideparen{pq}}k(s)ds=\pi,$$ 
\item[(iii)] the segment $\overline{pq}$ is included in the closure of $\Omega_\gamma.$
\end{enumerate}

On the curve $\gamma$ an arc $\wideparen{pq}$ contains the arc $\wideparen{p'q'}$, if $p$ precedes $p'$ and $q'$ precedes $q$.
We also say that $\wideparen{pq}$ is a maximal convex arc (maximal concave arc), if there exists no other convex arc (concave arc) $\wideparen{p'q'}$ containing $\wideparen{pq}$ and such that

$$\left|\int\limits_{\wideparen{p'q'}}k(s)ds\right|>\left|\int\limits_{\wideparen{pq}}k(s)ds\right|$$

We finally introduce the oscillation number of the curve $\gamma$, denoted by $\# \gamma$, and defined as the number of maximal disjoint concave arcs that can be found on $\gamma$. If $\#\gamma=0$ then $\Omega_\gamma$ is convex. For $\gamma\in\mathcal{K}$ then $\#\gamma$ is roughly speaking the number of inflections of $\gamma$.
 
\section{proof of Theorem \ref{main_th}}
We want to prove the assertion for a generic curve $\gamma\in\mathcal{K}$.
Since for closed $\gamma\in \mathcal{K}$ such that $\Omega_\gamma$ is convex, Theorem \ref{main_th} follows from Proposition \ref{pr_gage}, we restrict our attention to non convex $\Omega_\gamma$. Therefore we assume that the curve $\gamma$ contains at least one concave arc.

\begin{remark}
If the curve contains just one concave arc, say $\wideparen{p_1p_2}$, then the proof is trivial, since we can immediately apply Proposition \ref{two_convex}. In fact since $$\int\limits_{\wideparen{p_2p_1}}k(s)ds>2\pi$$ there exist two disjoint arcs holding convex sets. In particular we just have to fix two points $q_1$ and $q_2$ on $\gamma$ so that $$\int\limits_{\wideparen{q_1p_1}}k(s)ds=\int\limits_{\wideparen{p_2q_2}}k(s)ds=\pi$$ and then both $\wideparen{q_1p_1}$ and $\wideparen{p_2q_2}$ hold disjoint convex sets. See for instance Figure \ref{fig_ex}(a).
\end{remark}
\begin{figure}
\def\svgwidth{5.in}
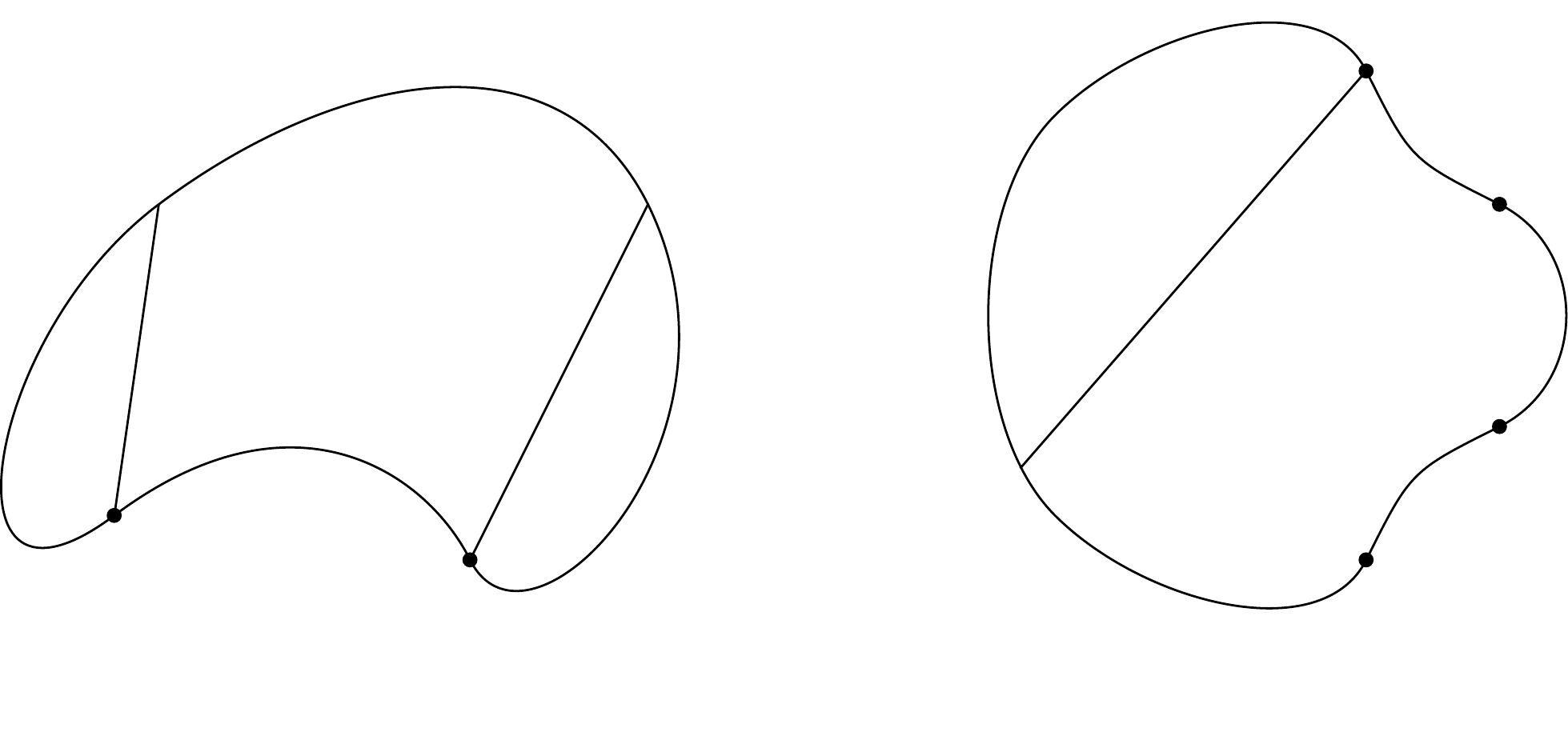\caption{(a) shows a curve $\gamma$ with only one concave arc. Regardless of the shape of the curve, if $p_1$ and $p_2$ are the endpoints of the concave arcs then it is possible to find two points $q_1$ and $q_2$ so that $\wideparen{q_1p_1}$ and $\wideparen{p_2q_2}$ hold two disjoint convex sets. (b) shows a curve $\gamma$ with two concave arcs.  Each ``$\bullet$'' marks a sign change of curvature. One can easily find one arc holding a convex set, for instance the arc $\wideparen{pq}$, however in this case Proposition \ref{two_convex} can not be applied since two disjoint arcs holding  convex set do not exist.} \label{fig_ex}
\end{figure}

It is clear that in general $\gamma$ does not fulfill assumptions in Proposition \ref{two_convex}.
For instance, even if the curve has just two concave arcs it is easy to construct, see for instance Figure \ref{fig_ex}(b), an example in which Proposition \ref{two_convex} can not be applied.

Therefore for general $\gamma$ our aim is to reshape the curve $\gamma$, decreasing its area, not increasing its elastic energy, and eventually being able to use Proposition \ref{two_convex} or Corollary \ref{c_two_convex}. 



\subsection*{Procedure 1 and 2}
There are basically two procedures that we can apply to the curve $\gamma$.
\vspace{.1in}
\paragraph{\it Procedure 1}
The first one consists in finding on the curve $\gamma$ three consecutive maximal arcs (see Figure \ref{fig_3arcs}), namely $\wideparen{p_1 p_2}$, $\wideparen{p_2 p_3}$ and $\wideparen{p_3 p_4}$, 
which are concave, convex and concave, such that

\begin{equation}\label{short_convex}
0<\int\limits_{\wideparen{p_2p_3}}k(s)ds\le \pi.
\end{equation}
\begin{figure}
\def\svgwidth{3.5 in}
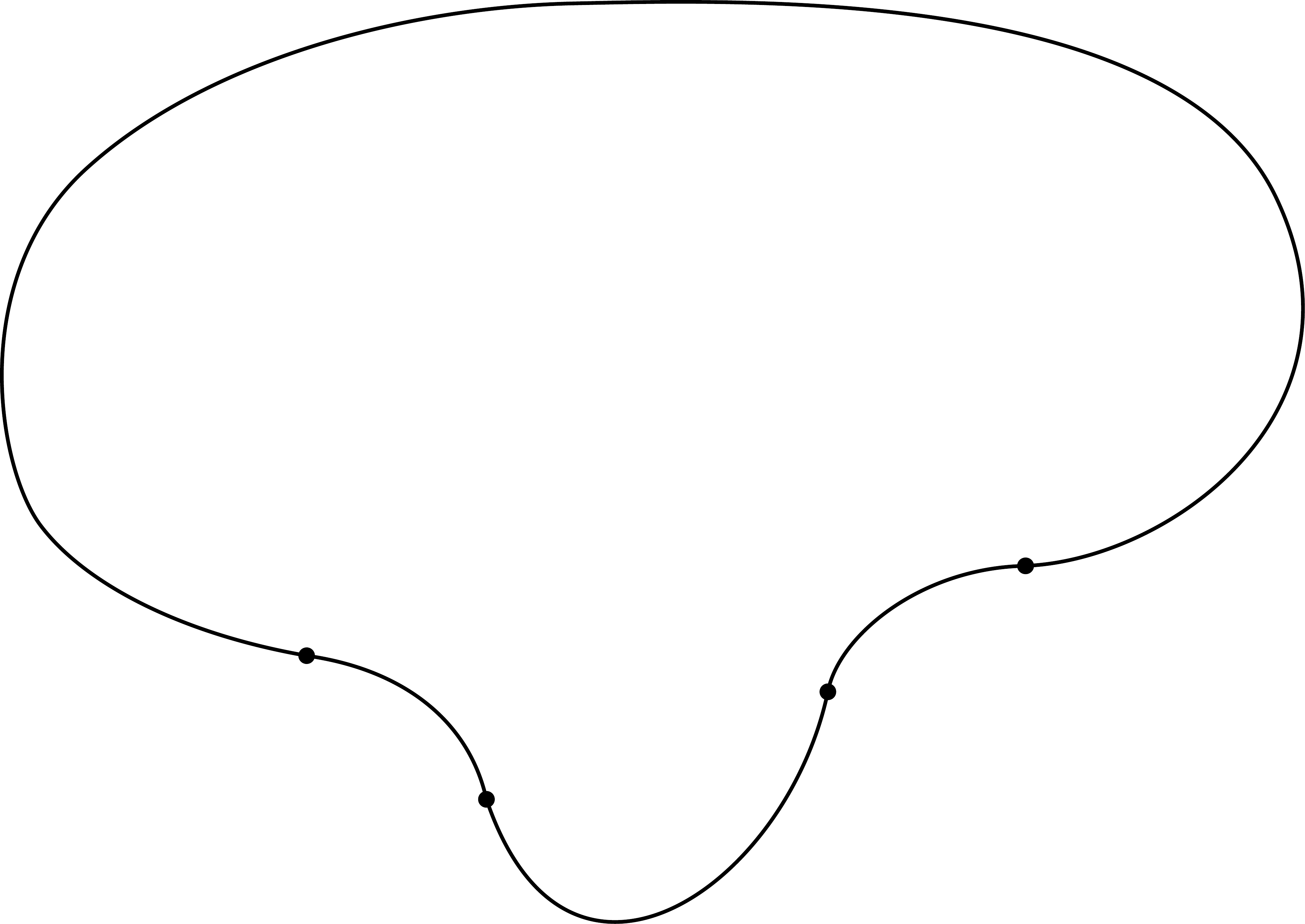\caption{A situation where \eqref{short_convex} holds.}\label{fig_3arcs}
\end{figure}
This allow us to change the shape of the curve $\gamma$ in the following way.

For all positive $\epsilon$ small enough it is possible to find on $\gamma$ two points $p_\epsilon\in\wideparen{p_{1}p_2}$ and $p^\star_\epsilon\in\wideparen{p_2p_3}$ such that the length of $\wideparen{p_\epsilon p^\star_{\epsilon}}$ equals $\epsilon$ and the tangents to $\gamma$ in these two points are parallel to each other. For $\epsilon$ small enough such a couple certainly exists, however it might be not uniquely determined if any of the arcs $\wideparen{p_1p_2} $ and $\wideparen{p_2p_3}$ is not strictly convex. In such a case we can impose the additional assumption that $p_\epsilon$ is the closest point to $p_2$ (in arc length distance) among those fulfilling the previous hypotheses. Thereafter we have uniquely identified  $p_\epsilon$ and $p^\star_\epsilon.$

Now we can consider the translation operator $\mathbb{T_\epsilon}$ acting on $\mathbb{R}^2$ such that $\mathbb{T_\epsilon}q= q-p_\epsilon^\star+p_\epsilon$ for all $q\in\mathbb{R}^2$. The curve $\mathbb{T}_\epsilon\gamma:=\gamma-p_\epsilon^\star+p_\epsilon$, is tangent to $\gamma$ in $p_\epsilon$. If $\epsilon$ is small enough there exists a point $q_\epsilon$ on the arc $\mathbb{T_\epsilon}\, \wideparen{p_2p_3}$ 
and a point $p'_\epsilon$ on the arc $\wideparen{p_3p_4}$ such that the line $r_\epsilon$ passing through these two points is tangent to both $\mathbb{T_\epsilon}\,\wideparen{p_2p_3}$ and $\wideparen{p_3p_4}$.

\begin{figure}
\def\svgwidth{3.5 in}
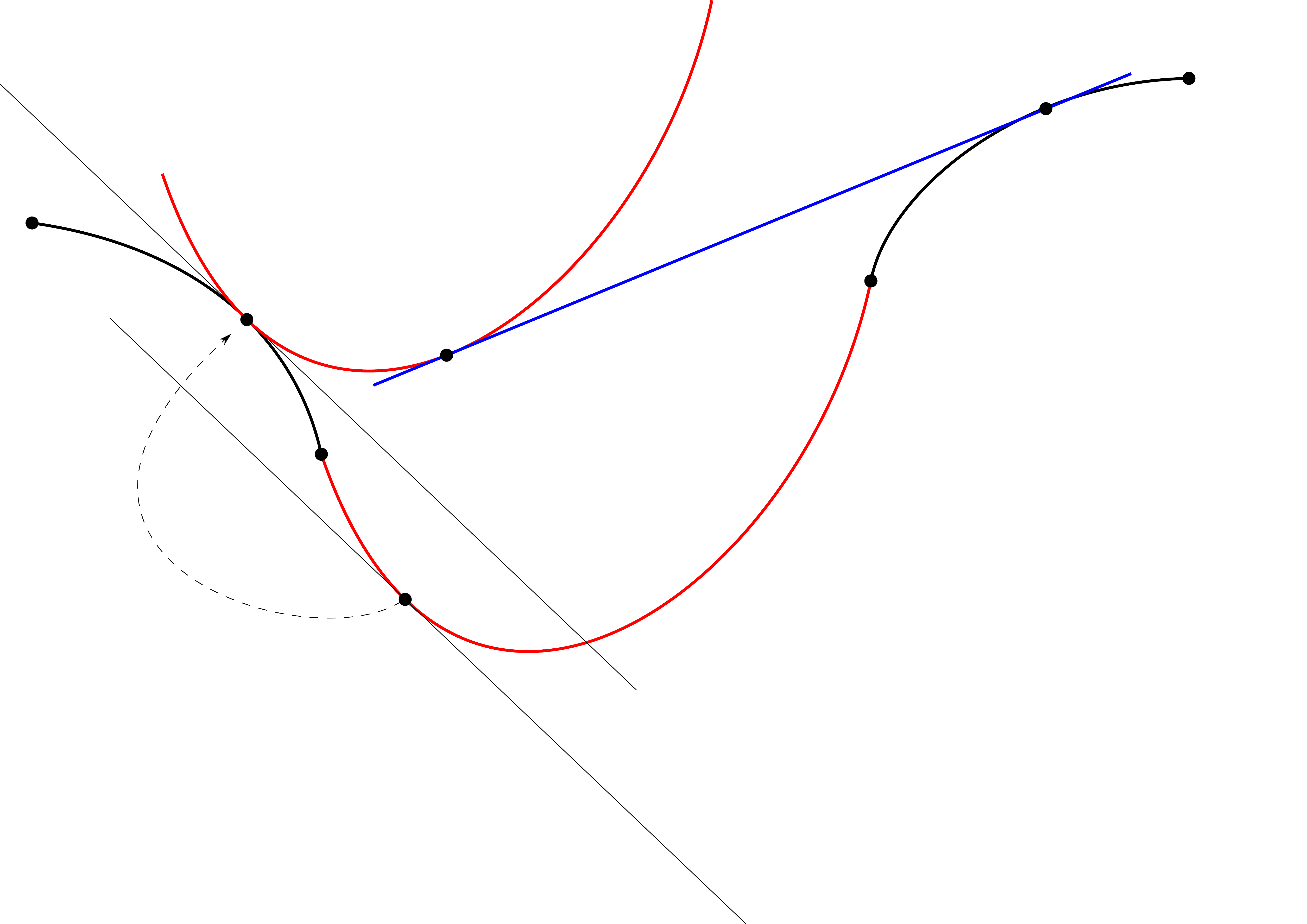\caption{First step in Procedure 1.}
\end{figure}

\begin{figure}
\def\svgwidth{3.5 in}
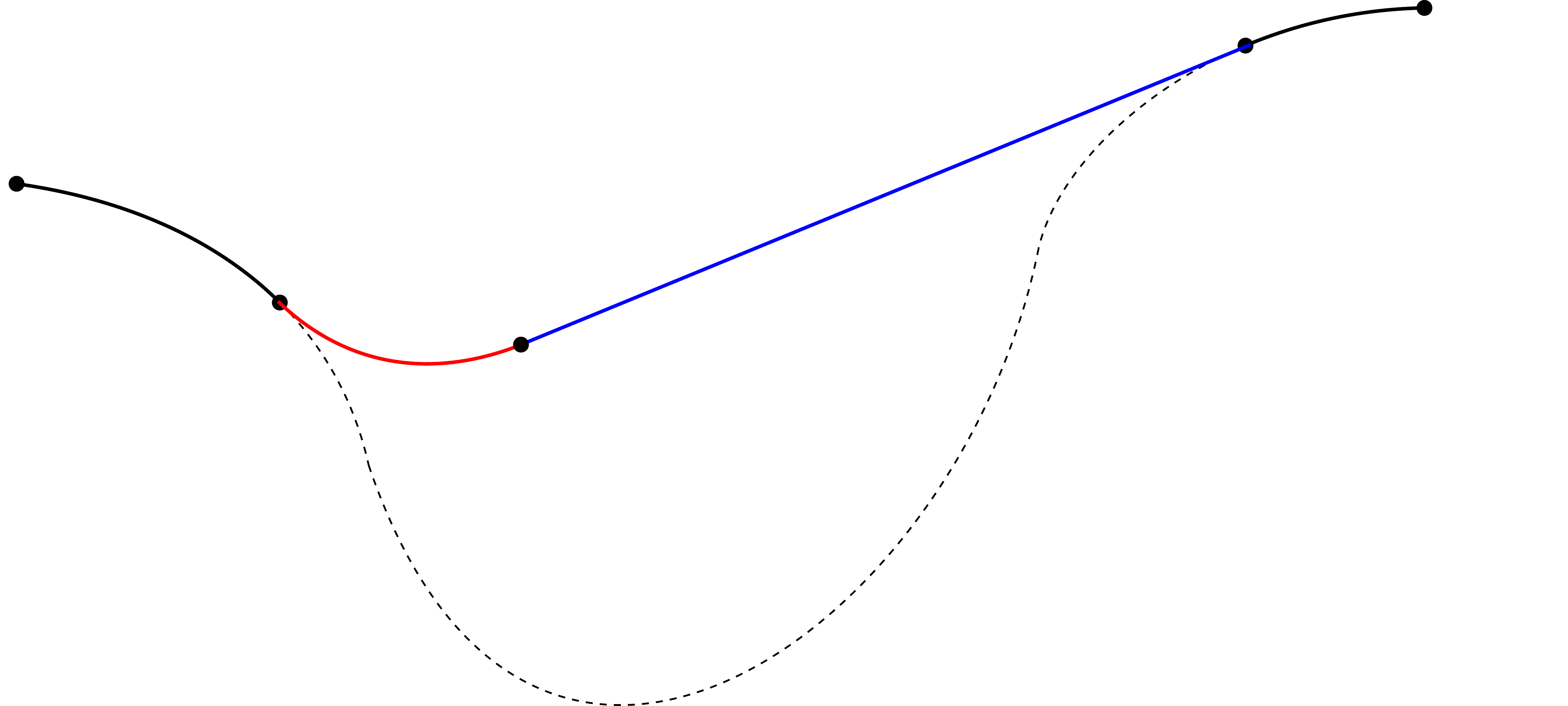\caption{Final shape in Procedure 1.}
\end{figure}

Finally we consider the new curve $\gamma_\epsilon$ which is obtained from $\gamma$ after the arc $\wideparen{p_1p_4}$ is replaced by $\wideparen{p_{1}p_{\epsilon}}\cup\wideparen{p_{\epsilon}q_\epsilon}\cup\overline{q_\epsilon p'_{\epsilon}}\cup \wideparen{p'_{\epsilon}p_{4}}$. Here the arc $\wideparen{p_{\epsilon}q_\epsilon}$ is meant to be an arc of $\mathbb{T}_\epsilon \gamma$, while $\wideparen{p_{1}p_{\epsilon}}$ and $\wideparen{p'_{\epsilon}p_4}$ are arcs of $\gamma$.

Clearly for all positive $\epsilon$ small enough we have $A(\gamma_\epsilon)< A(\gamma)$ and $E(\gamma_\epsilon) \le E(\gamma)$. 
For the construction to work 
$\mathbb{T}_\epsilon\wideparen{p_2p_3}$ has to belong to the closure of $\Omega_\gamma$ at least for small enough $\epsilon>0$, and this is true if and only if $\eqref{short_convex}$ holds.

Let us denote by $\bar\epsilon$ the supremum of all $\epsilon$ such that the construction can be worked out and the curve $\gamma_\epsilon$ is simple. At least one of the following facts certainly occurs:
\begin{enumerate}[label=\textbf{F.\arabic*}]
\item 
the point $p_{\bar\epsilon}$ coincides with $p_{1}$,
\item the point $q_{\bar\epsilon}$ coincides with $p_{\bar\epsilon}$,
\item the point $p'_{\bar\epsilon}$ coincides with $p_{4}$,
\item the curve $\gamma_{\bar\epsilon}$ is not simple and pinches somewhere in between $p_{\bar\epsilon}$ and $p'_{\bar\epsilon}$.\label{pinching}
\end{enumerate}

\vspace{.1in}
\paragraph{\it Procedure 2} The second procedure consists in finding 
a concave arc $\wideparen{p_1 p_2}$ on $\gamma$ such that (see Figure \ref{fig_concave1})

\begin{equation}\label{large_concave}
\int\limits_{\wideparen{p_1p_2}}k(s)ds\le -\pi.
\end{equation}

\begin{figure}
\def\svgwidth{3.in}
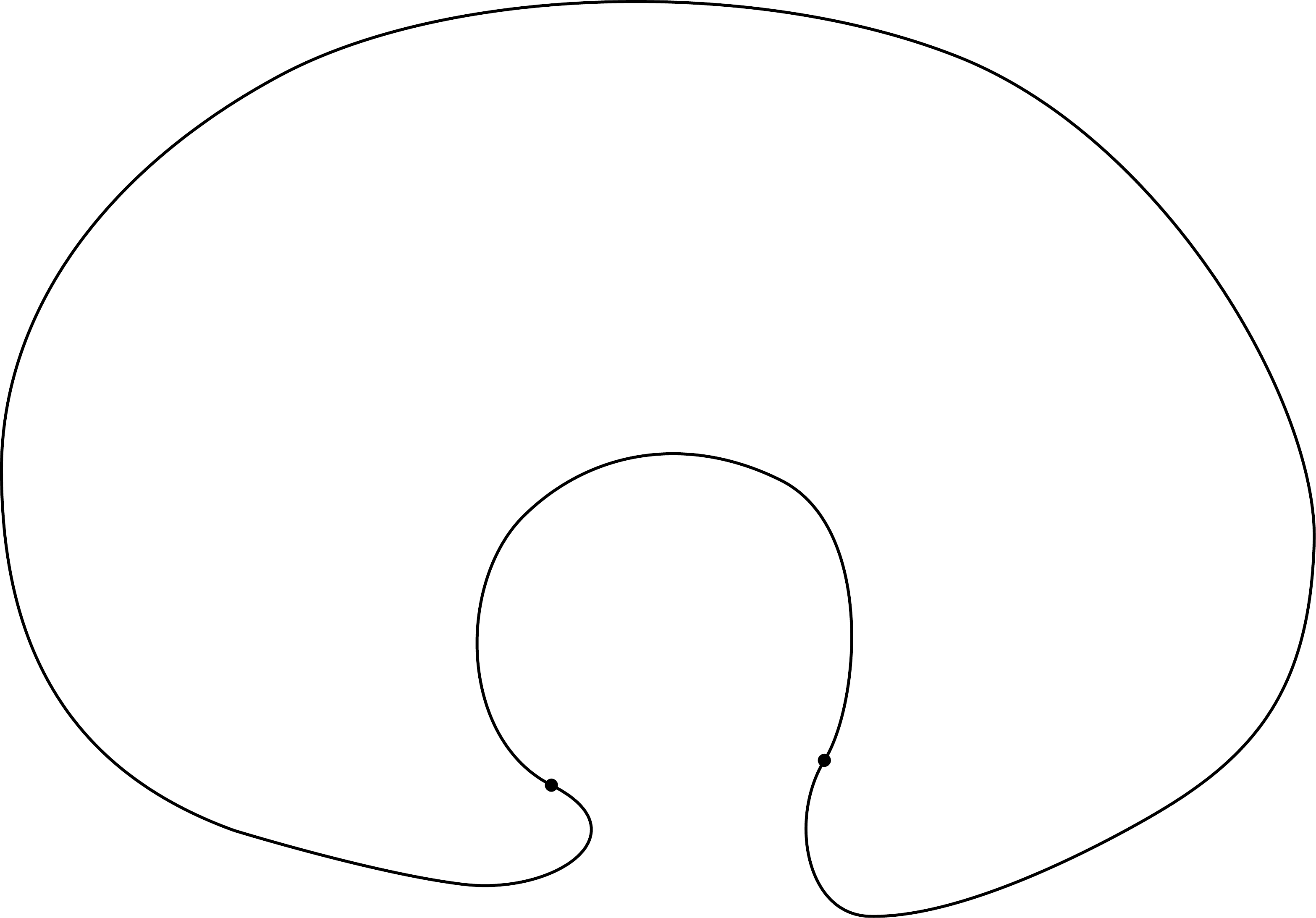\caption{A situation where \eqref{large_concave} holds.}\label{fig_concave1}
\end{figure}
This allow us to change the shape of the curve $\gamma$ in the following way.

Condition \eqref{large_concave} implies that there exist a point $p'_1$ on $\wideparen{p_1p_2}$, such that 
$$\int\limits_{\wideparen{p_1p'_1}}k(s)ds= -\pi,$$ so that the tangents to $\gamma$ on $p_1$ and $p'_1$ are parallel to each other. Then we consider a positive $\epsilon$ small enough and we simply translate by $\epsilon$ the arc $\wideparen{p_1p'_1}$ along the direction of these parallel lines, inward with respect to $\Omega_\gamma$ (see Figure \ref{fig_concave2}). To be more precise, let us denote by ${\bf t}_1$ the tangent vector in $p_1$. We construct a new curve $\gamma_\epsilon$ by replacing the arc $\wideparen{p_1p'_1}$ with the union of:
\begin{itemize}
\item the segment of endpoints $p_1+\epsilon {\bf t}_1$ and $p_1$;
\item the arc $\wideparen{p_1p'_1}+\epsilon {\bf t}_1$;
\item the segment of endpoints $p'_1$ and $p'_1+\epsilon {\bf t}_1$.
\end{itemize}

Figure \ref{fig_concave_end} depicts the shape of $\gamma_\epsilon$. Clearly for all positive $\epsilon$ small enough we have $A(\gamma_\epsilon)< A(\gamma)$ and $E(\gamma_\epsilon) = E(\gamma)$. For the construction to work, condition \eqref{large_concave} is clearly necessary.
Let us denote by $\bar\epsilon$ the supremum of all $\epsilon$ such that the procedure can be carried on and the curve $\gamma_\epsilon$ is simple. Clearly by construction $\gamma_{\bar\epsilon}$ is not simple and in particular the curve pinches somewhere on the arc of end points $p_1+\bar\epsilon {\bf t}_1$ and $p'_1+\bar\epsilon {\bf t}_1$.\\
We emphasize that the very same construction can be worked out using the endpoint $p_2$ and the tangent vector ${\bf t}_2$ instead of the end point $p_1$ and tangent vector ${\bf t}_1$ (see also Figures \ref{fig_concave_other1} and \ref{fig_concave_otherend}). Having this in mind, in the forthcoming paragraphs when necessary we specify whether we apply Procedure 2 by using point $p_1$ or $p_2$.
 

\begin{figure}
\def\svgwidth{3.in}
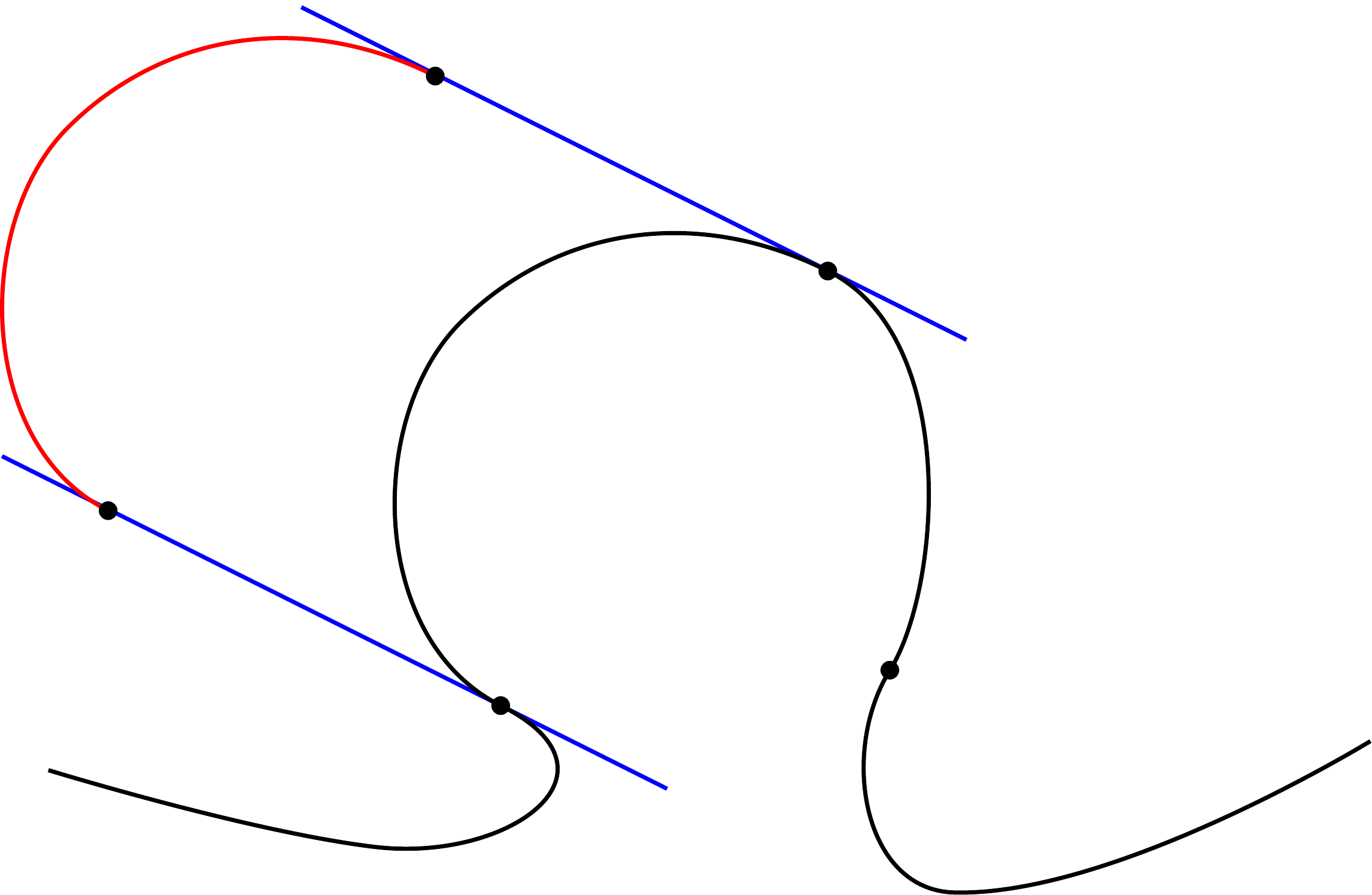\caption{First step in Procedure 2.}\label{fig_concave2}
\end{figure}

\begin{figure}
\def\svgwidth{3.in}
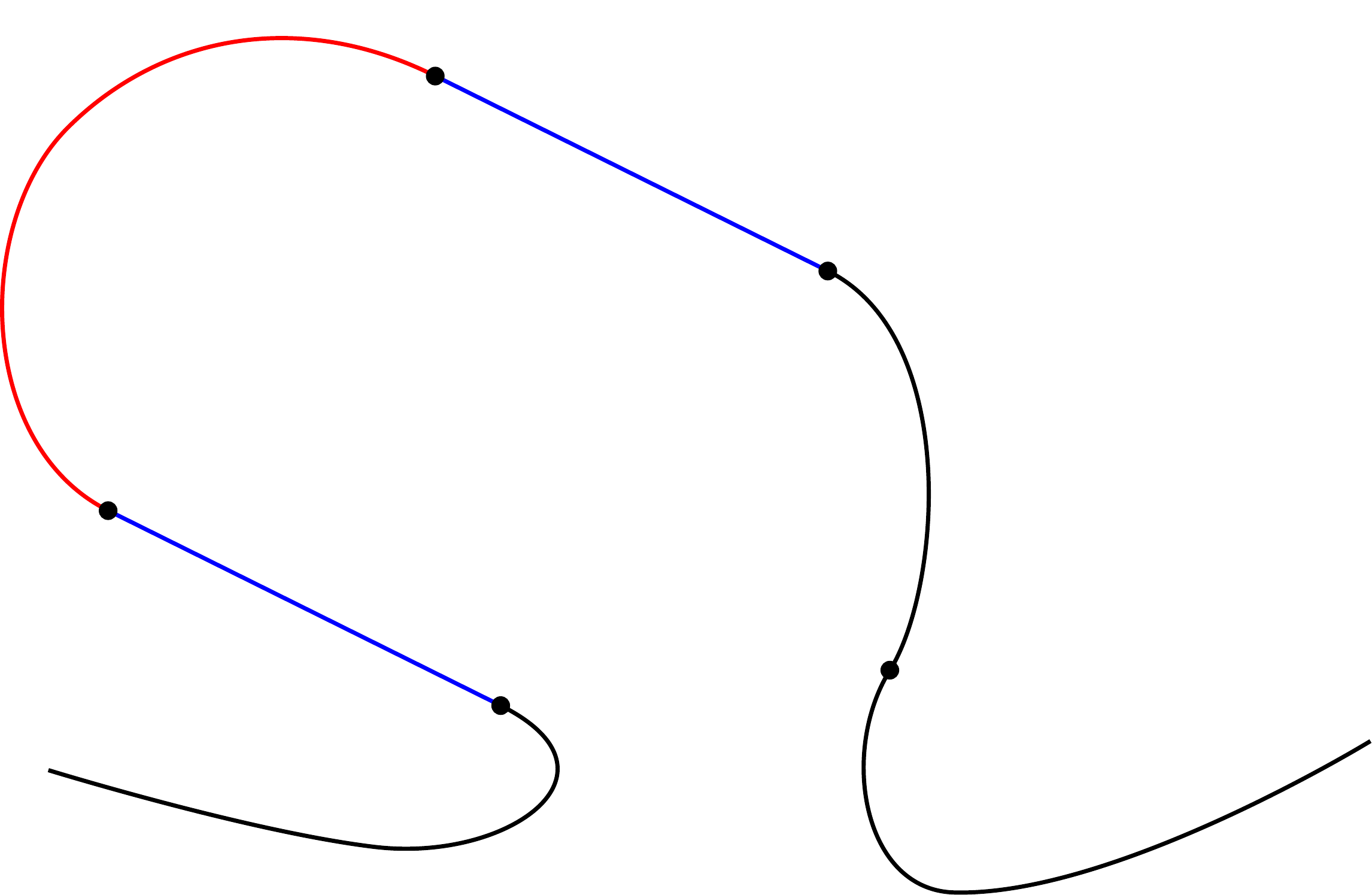\caption{Final shape in Procedure 2.}\label{fig_concave_end}
\end{figure}

\begin{figure}
\def\svgwidth{3.in}
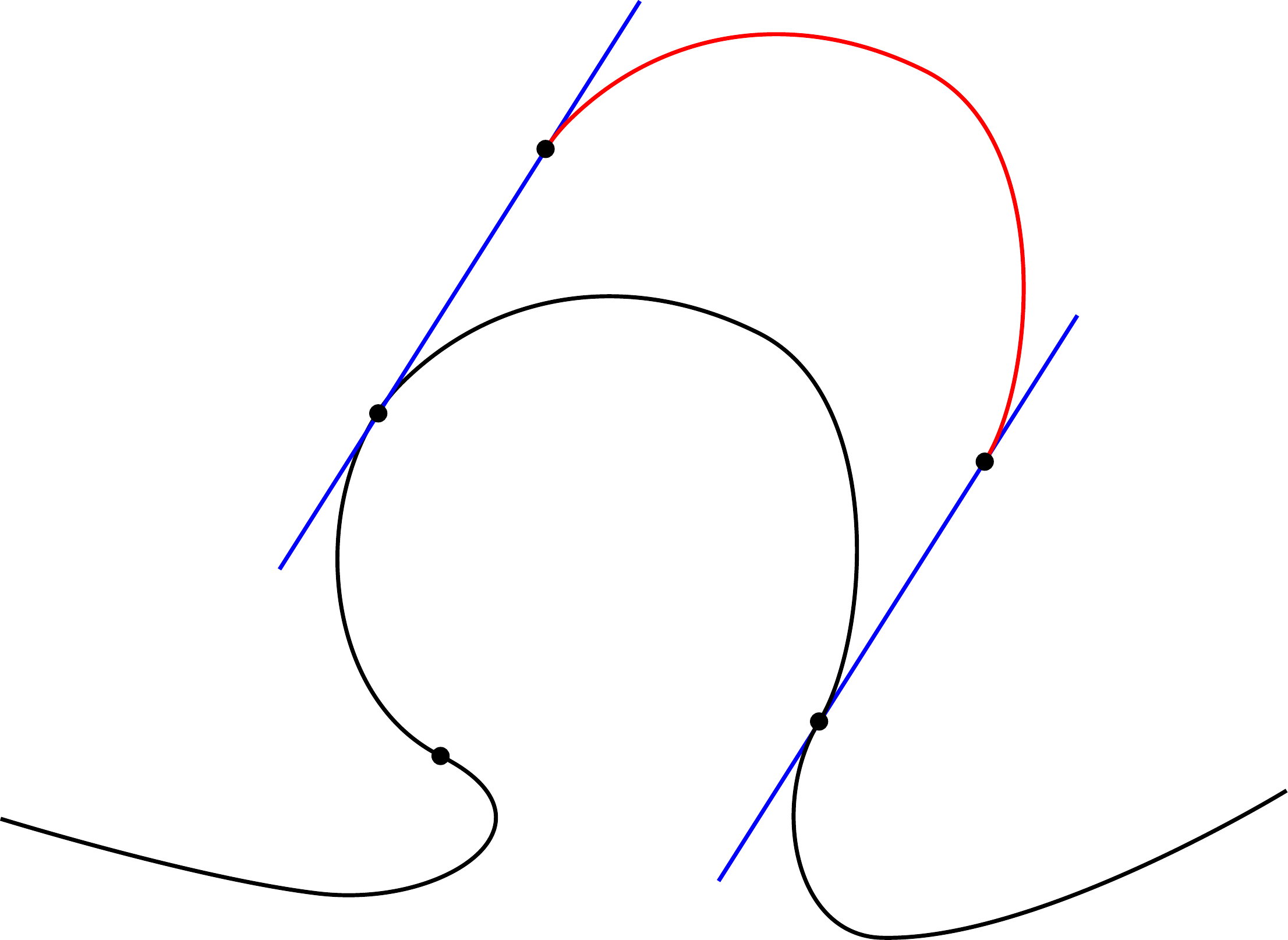\caption{Alternative first step in Procedure 2.}\label{fig_concave_other1}
\end{figure}

\begin{figure}
\def\svgwidth{3.in}
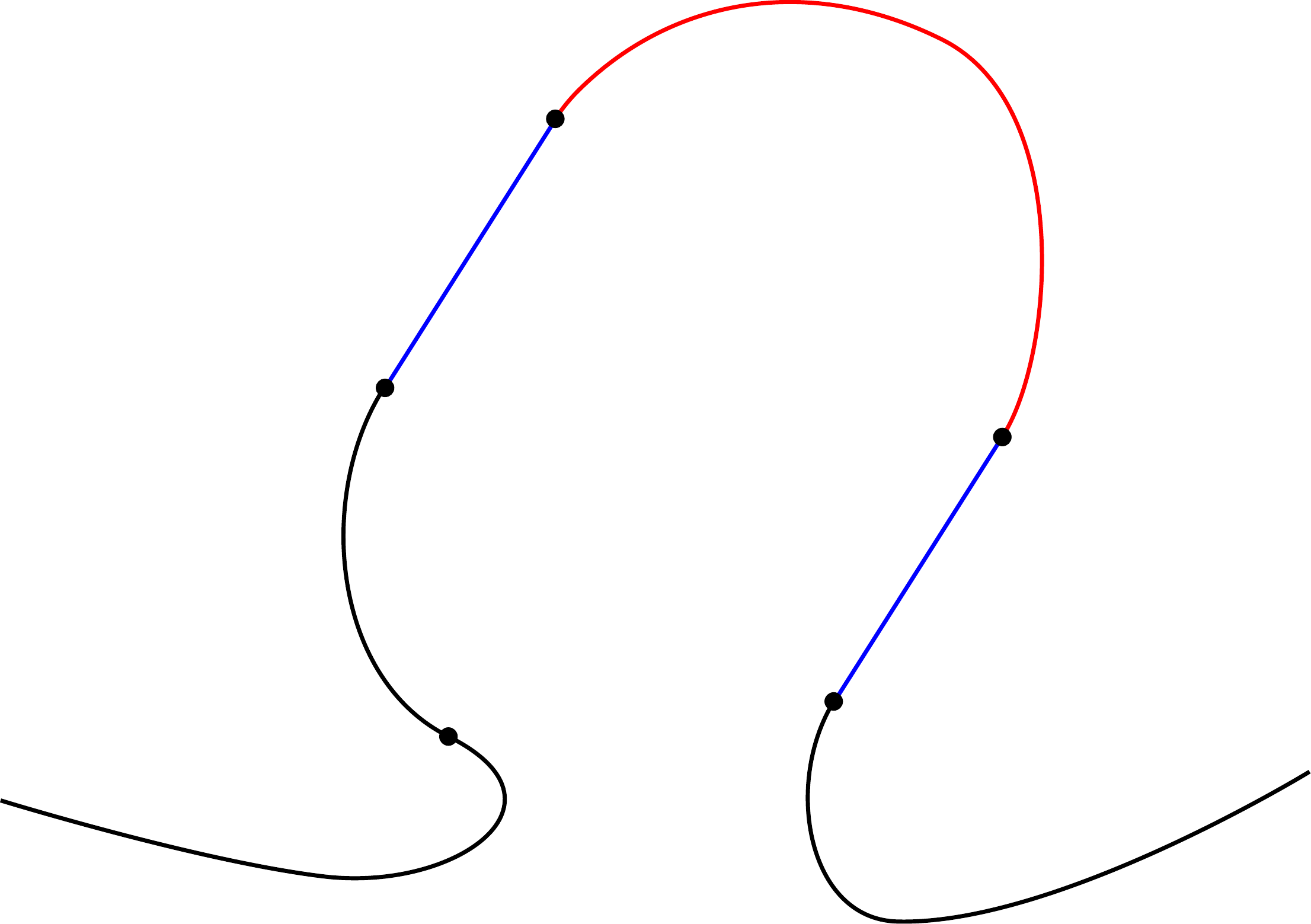\caption{Alternative final shape in Procedure 2.}\label{fig_concave_otherend}
\end{figure}

\subsection*{The Algorithm} In view of the previous subsection we describe here the algorithm which yields to the final shape.

\vspace{.1in}
\paragraph{\it A) Smooth case}
Once we have defined Procedures 1 and 2 we can start reshaping our initial curve $\gamma\in\mathcal{K}$. We start applying, if possible, Procedure 1 to some convex arc. In case condition \ref{pinching} does not occur, then $\#\gamma_{\bar\epsilon}<\#\gamma$. Obviously $A(\gamma_{\bar\epsilon})<A(\gamma)$ and $E(\gamma_{\bar\epsilon})\le E(\gamma).$ We rename $\gamma_{\bar\epsilon}$, and for simplicity we denote it by $\gamma$. We go on repeating Procedure 1, as long as condition \ref{pinching} does not occur, and we can find convex arcs to which Procedure 1 can be applied. Observe that this stage ends after a finite number of iterations. \\

Let us assume for the moment that we iterate the process and we end without pinching, then we still have a curve in $\mathcal{K}$ and we can look for the possibility to apply Procedure 2. If this is possible the curve undergoes pinching. If it is not possible to apply Procedure 2 then $\gamma$ is a curve in $\mathcal{K}$ such that all maximal convex arcs $\wideparen{pq}$ satisfy
\begin{equation}\label{eq_KP1}\int\limits_{\wideparen{pq}} k(s)ds> \pi,
\end{equation}
and all maximal concave arcs $\wideparen{pq}$ satisfy
\begin{equation}\label{eq_KP2}0>\int\limits_{\wideparen{pq}} k(s)ds> -\pi.
\end{equation}

\begin{definition}[Class $\mathcal{K}_\pi$]\label{def_KP} We say that a curve in $\mathcal{K}$ belongs to the class $\mathcal{K}_\pi$ if all maximal convex arcs $\wideparen{pq}$ satisfy \eqref{eq_KP1} and all maximal concave arcs $\wideparen{pq}$ satisfy \eqref{eq_KP2}. 
\end{definition}

The class $\mathcal{K}_\pi$ contains obviously convex sets, but also non trivial sets like those in Figure \ref{fig_final_smooth}. In principle there is no bound on $\#\gamma$ if $\gamma\in\mathcal{K}_\pi$. 

\begin{figure}
\def\svgwidth{6.in}
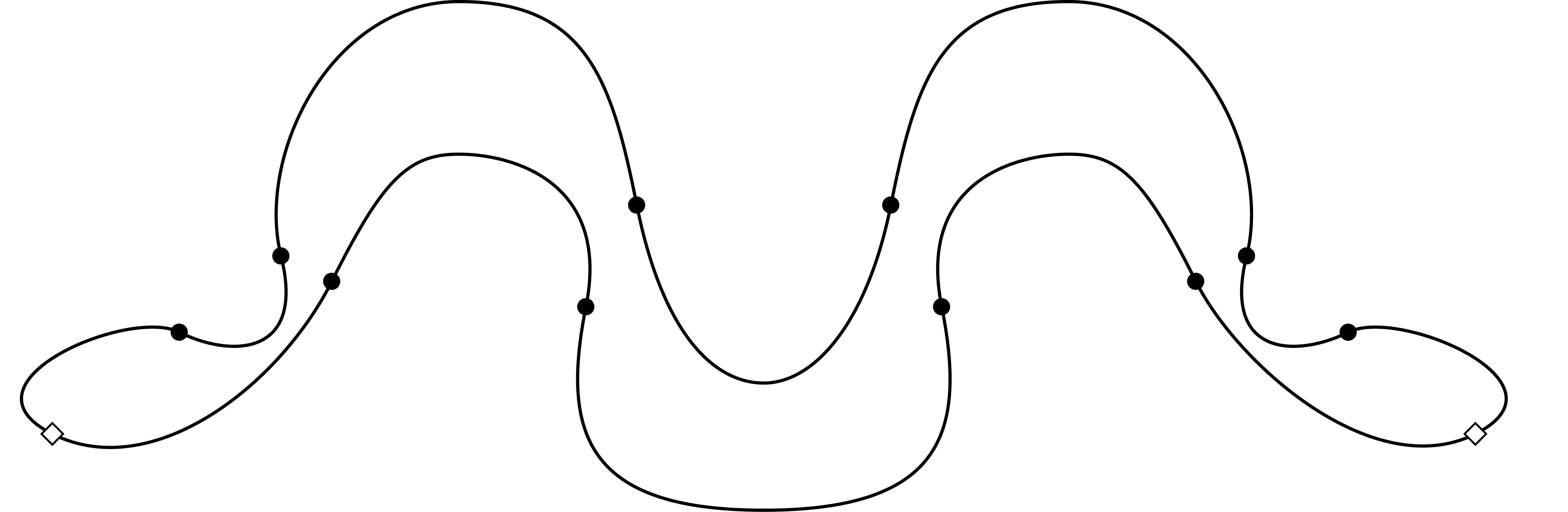\caption{This figure shows a curve in $\mathcal{K}_\pi$. We marked with ``$\bullet$'' all points where curvature changes sign. We also marked two arcs $\wideparen{p_1p'_1}$ and $\wideparen{p'_2p_2}$ which hold two disjoint convex sets. Observe that any other arc holding a convex set has non empty intersection with one of this two.}\label{fig_final_smooth}
\end{figure}

\vspace{.1in}
\paragraph{\it B.1) Non smooth case: the first pinching.}
What happens if pinching occurs? This might happen if in the previous paragraph we are able to apply Procedure 2 or, if Procedure 1 leads to condition \ref{pinching}. 

In any case, we can assume that we start from a curve $\gamma$ in $\mathcal{K}$ and we end up with the curve $\gamma_{\bar\epsilon}$ which pinches somewhere. The simplest case is that the pinching occurs just in one point. In this case we can split the curve into two curves $\gamma_1$ and $\gamma_2$ belonging to the class $\mathcal{C}$. 
Indeed, the fact that the curve $\gamma_{\bar\epsilon}$ pinches in one point means that there exists a unique couple $(s_1,s'_1)$, with $0\le s_1 < s'_1$ such that  
$\gamma_{\bar\epsilon}(s_1)=\gamma_{\bar\epsilon}(s'_1)$. By construction the arc of curve 
in between $\gamma_{\bar\epsilon}(s_1)$ and $\gamma_{\bar\epsilon}(s'_1)$ 
can be parametrized by a curve $\gamma_1$ which belongs to $\mathcal{C}$. The arc of curve between 
$\gamma_{\bar\epsilon}(s')$ and $\gamma_{\bar\epsilon}(s)$ 
can be as well parametrized by a curve $\gamma_2$ which belongs to $\mathcal{C}$. By trivial continuity argument we have
$$A(\gamma_1)+A(\gamma_2)< A(\gamma)\quad\mbox{and}\quad E(\gamma_1)+E(\gamma_2)\le E(\gamma).$$

In case the pinching of $\gamma_{\bar\epsilon}$ occurs in more then one point, as in Figure \ref{fig_first_pinching}, then by an appropriate parametrization we can find two couples $(s_1,s'_1)$ and $(s_2,s'_2)$ such that
\begin{itemize}
\item $s_1 < s_2 < s'_2< s'_1 $, 
\item $\gamma_{\bar\epsilon}(s_1)=\gamma_{\bar\epsilon}(s'_1)$ and $\gamma_{\bar\epsilon}(s_2)=\gamma_{\bar\epsilon}(s'_2)$,
\item for all $0\le \bar s < \bar s'$ such that $\gamma_{\bar\epsilon}(\bar s)=\gamma_{\bar\epsilon}(\bar s')$ we have $s_1 \le \bar s \le s_2$ and $s'_1 \ge \bar s' \ge s'_2$.
\end{itemize}

This means that we are choosing $p_1\equiv\gamma_{\bar\epsilon}(s_1)$ and $p_2\equiv\gamma_{\bar\epsilon}(s_2)$ so that by traveling along the curve between $s_1$ and $s_2$ we pass through all pinching points of $\gamma_{\bar\epsilon}$ one and only one time. To a certain extent $p_1$ and $p_2$ correspond to the ``first'' and the ``last'' pinching point of $\gamma_{\bar\epsilon}$.

Once again by construction the arc of curve
in between $\gamma_{\bar\epsilon}(s'_1)$ and $\gamma_{\bar\epsilon}(s_1)$ 
can be parametrized by a curve $\gamma_1$ which belongs to $\mathcal{C}$. The arc of curve in between $\gamma_{\bar\epsilon}(s_2)$ and $\gamma_{\bar\epsilon}(s'_2)$ 
can be as well parametrized by a curve $\gamma_2$ which belongs to $\mathcal{C}$. 

By trivial continuity argument we have
\begin{equation}\label{getting_better}
A(\gamma_1)+A(\gamma_2)< A(\gamma)\quad\mbox{and}\quad E(\gamma_1)+E(\gamma_2)\le E(\gamma).
\end{equation}

\vspace{.1in}
\paragraph{\it B.2) Non smooth case: after the first pinching}
After the first pinching occurs we have to deal with two curves $\gamma_1$ and $\gamma_2$ both belonging to the class $\mathcal{C}$. Let us focus on $\gamma_1$. All what we say for $\gamma_1$ can be repeated for $\gamma_2$. We start by observing that Procedure 1 can be applied to $\gamma_1$, in the very same way we already did to $\gamma$, 
whenever we find three maximal arcs $\wideparen{p_1 p_2}$, $\wideparen{p_2 p_3}$ and $\wideparen{p_3 p_4}$, 
respectively alternately concave, convex and concave, such that \eqref{short_convex} holds true, with the additional assumption that the cusp is neither in $p_2$ nor in $p_3$. Assuming we can apply Procedure 1 to $\gamma_1$, then, if condition \ref{pinching} does not occur, we end up with the curve $\gamma_{1\bar\epsilon}$ which is still in $\mathcal{C}$ and after renaming the curve $\gamma_{1\bar\epsilon}$ as $\gamma_1$, inequalities \eqref{getting_better} still hold true.
Therefore our strategy is to iterate Procedure 1 to both $\gamma_1$ and $\gamma_2$ as long as \ref{pinching} does not occur and as long as there exist arcs fulfilling the necessary requirements.

Moreover we observe that Procedure 2 can be also applied to $\gamma_1$ (and/or $\gamma_2$) if there exists a maximal concave arc $\wideparen{p_1p_2}$ such that \eqref{large_concave} holds true. This time we just have to be careful that if $p_1$ correspond to the cusp, we will perform the construction of Procedure 2 by using the point $p_2$ and viceversa. 
If some arc satisfies the conditions to apply Procedure 2, then the curve will necessarily undergo another pinching.
\\

\vspace{.1in}
\paragraph{\it B.3) Non smooth case: dealing with subsequent pinchings}
What happens if pinching occurs again?\\
Assume that we apply Procedure 2 to $\gamma_1\in \mathcal{C}$ as in Figure \ref{fig_second_pinching}, or that condition \ref{pinching} occurs after applying Procedure 1 to $\gamma_1\in\mathcal{C}$, 
in both cases we have a curve $\gamma_{1\bar\epsilon}$ that in general pinches in more then one point.  We can find two couples $(s_1,s'_1)$ and $(s_2,s'_2)$ such that
\begin{itemize}
\item $s_1 \le s_2 < s'_2\le s'_1 $, 
\item $\gamma_{1\bar\epsilon}(s_1)=\gamma_{1\bar\epsilon}(s'_1)$ and $\gamma_{1\bar\epsilon}(s_2)=\gamma_{1\bar\epsilon}(s'_2)$,
\item for all $0< \bar s < \bar s'$ such that $\gamma_{1\bar\epsilon}(\bar s)=\gamma_{1\bar\epsilon}(\bar s')$ we have $s_1 \le \bar s \le s_2$ and $s'_1 \ge \bar s' \ge s'_2$.
\end{itemize}

Remember that $\gamma_{1\bar\epsilon}(0)=\gamma_{1\bar\epsilon}(L)$ is the cusp of $\gamma_{1\bar\epsilon}.$ 

This means that, by traveling along the curve between $\gamma_{1\bar\epsilon}(s_1)$ and $\gamma_{1\bar\epsilon}(s_2)$ we pass through all pinching points of $\gamma_{\bar\epsilon}$ one and only one time, and to a certain extent the point $\tilde p\equiv\gamma_{1\bar\epsilon}(s_2)=\gamma_{1\bar\epsilon}(s'_2)$ corresponds to the ``last'' pinching point of $\gamma_{1\bar\epsilon}$.

By construction the arc of curve
in between $\gamma_{1\bar\epsilon}(s_2)$ and $\gamma_{1\bar\epsilon}(s'_2)$ 
can be parametrized by a curve which belongs to $\mathcal{C}$. We rename such a curve $\gamma_1$ and we observe that once again inequalities \eqref{getting_better} hold true.

\vspace{.1in}
\paragraph{\it B.4) Non smooth case: the final shape}
Now it is clear that, even in case that pinching occurs, we can apply Procedure 1 and Procedure 2 to the curves $\gamma_1$ and $\gamma_2$ and iterate the process (a finite number of times) until we end up with two curves in the class $\mathcal{C}$, such that on each curve all maximal convex arcs $\wideparen{pq}$ not adjacent to the cusp satisfy
\begin{equation}\label{eq_CP1}\int\limits_{\wideparen{pq}} k(s)ds> \pi,
\end{equation}
and all maximal concave arcs $\wideparen{pq}$ satisfy
\begin{equation}\label{eq_CP2}0>\int\limits_{\wideparen{pq}} k(s)ds> -\pi.
\end{equation}

\begin{definition}[Class $\mathcal{C}_\pi$]\label{def_CP} We say that a curve in $\mathcal{C}$ belongs to the class $\mathcal{C}_\pi$ if all maximal convex arcs $\wideparen{pq}$ not adjacent to the cusp satisfy \eqref{eq_CP1} and all maximal concave arcs $\wideparen{pq}$ satisfy \eqref{eq_CP2}. 
\end{definition}


\subsection*{Some properties of $\mathcal{K}_\pi$ and $\mathcal{C}_\pi$}
Now that we introduced the sets $\mathcal{K}_\pi$ and $\mathcal{C}_\pi$, we observe the following facts.

We remind that, if $\wideparen{pq}$ is any arc of $\gamma$, by total curvature of $\wideparen{pq}$ we mean $$\int_{\wideparen{pq}}k(s)ds,$$
and the above integral represents the signed difference in angle between the tangent vector in $q$ and the tangent vector in $p$, also called rotation angle. If the curve $\gamma$ belongs to $\mathcal{C}_\pi$ and the cusp is somewhere in between $p$ and $q$ then the total curvature can be still defined as 
\begin{equation}\label{rot_c}
\int_{\wideparen{pq}}k(s)ds+\pi.
\end{equation}
For a piecewise smooth curve indeed the rotation angle is defined to be the sum of the changes in the angles along each
smooth arc plus the sum of the jump angles in singular points. The changes in the angles along a
smooth arc is the curvature integral along the arc. The jump angle at a singular point $\gamma(s_0)$ is defined to be the angle
from the incoming tangent vector $\gamma'(s_0^-)$ to the outgoing tangent vector $\gamma'(s_0^+)$. For curves in $\mathcal{C}_\pi$ the rotation angle in the cusp is $\pi$. This clarifies the definition in \eqref{rot_c}.

In view of the above considerations, a straightforward consequence of Definitions \ref{def_KP} and \ref{def_CP} is the following Lemma.

\begin{lemma}\label{lem_total} If $\gamma$ is a curve in $\mathcal{K}_\pi$ or in $\mathcal{C}_\pi$, and $\wideparen{pq}$ is any arc of $\gamma$, then the total
curvature of $\wideparen{pq}$ is always strictly grater then $-\pi$.
\end{lemma}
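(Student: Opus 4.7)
My strategy is to decompose any arc $\wideparen{pq}$ of $\gamma$ into a finite union of partial and full maximal convex/concave pieces, and to sum lower bounds piece by piece. Writing $P$, $Q$ for the maximal arcs of $\gamma$ containing $p$ and $q$ respectively, one has $\wideparen{pq} = S_1 \cup A_1 \cup \cdots \cup A_n \cup S_2$, where $S_1 \subset P$ and $S_2 \subset Q$ are the partial endpoint pieces and $A_1, \ldots, A_n$ are the intervening full maximal arcs, which alternate between convex and concave. The elementary bounds I use are: (i) any portion of a convex arc has total curvature $\geq 0$; (ii) any portion of a maximal concave arc has total curvature greater than or equal to that of the full arc, hence $>-\pi$ by \eqref{eq_KP2} or \eqref{eq_CP2}; (iii) any full maximal convex arc not adjacent to the cusp has total curvature $>\pi$ by \eqref{eq_KP1} or \eqref{eq_CP1}.

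For $\gamma \in \mathcal{K}_\pi$, I split into four cases according to whether $P$ and $Q$ are convex or concave. Because maximal arcs alternate, in each case the parity of $n$ and the numbers of convex versus concave $A_i$'s are determined. Summing the bounds (i)--(iii) yields total curvature strictly greater than $-\pi$ in every case. For example, when both $P$ and $Q$ are concave, $A_1$ and $A_n$ must be convex, $n$ is odd, and the full convex $A_i$'s outnumber the full concave $A_i$'s by exactly one, so the middle contributes strictly more than $\pi$; combined with $S_1 > -\pi$ and $S_2 > -\pi$, the total is strictly greater than $-\pi + \pi - \pi = -\pi$. The three other cases are analogous and in fact easier.

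For $\gamma \in \mathcal{C}_\pi$ the analysis extends as follows. If the cusp is not interior to $\wideparen{pq}$, the previous argument goes through verbatim: a cusp-adjacent convex arc (lacking the $>\pi$ bound) can only appear inside $\wideparen{pq}$ as a partial endpoint piece, where (i) already gives the needed nonnegativity. If the cusp lies in the interior of $\wideparen{pq}$, I split at the cusp, $\wideparen{pq} = \wideparen{p,c} \cup \wideparen{c,q}$; by \eqref{rot_c} the total curvature equals $\int_{\wideparen{pq}} k\,ds + \pi$. I apply the same case analysis to each half, where now the full cusp-adjacent arc plays the role of the final piece of that half, contributing $\geq 0$ if convex or $> -\pi$ if concave. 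A short check then shows that each half satisfies $\int k\,ds > -\pi$, so the total is $> -\pi - \pi + \pi = -\pi$. The degenerate case where the cusp coincides with $p$ or $q$ is subsumed by the same argument, with the cusp-adjacent arc now playing the role of an endpoint piece.

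The only real difficulty is the combinatorial bookkeeping: one must consistently track the alternation of arc types, the parity of $n$, the type of $S_1$ and $S_2$, and, in the $\mathcal{C}_\pi$ case, the position of the cusp-adjacent arcs relative to $\wideparen{pq}$. Once the cases are laid out, each closes by a one-line arithmetic in which the strict inequalities \eqref{eq_KP1}--\eqref{eq_CP2} supply exactly the slack needed to reach $>-\pi$.
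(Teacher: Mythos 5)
Your proof is correct and is precisely the decomposition-and-counting argument the paper has in mind: the authors state the lemma as ``a straightforward consequence'' of Definitions \ref{def_KP} and \ref{def_CP} without writing it out, and your case analysis (partial endpoint pieces bounded by $\geq 0$ or $>-\pi$, full interior maximal arcs alternating and bounded via \eqref{eq_KP1}--\eqref{eq_CP2}, plus the $+\pi$ cusp contribution from \eqref{rot_c}) supplies the missing details accurately, including the correct handling of cusp-adjacent convex arcs, which only ever need the weaker bound $\geq 0$.
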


We have the following Lemma.
\begin{lemma}\label{lem_rotation}
Let $\wideparen{pq}$ be an arc of a curve $\gamma\in\mathcal{K}_\pi$ (or $\gamma\in\mathcal{C}_\pi$ as long as $p$ is not the cusp). Let us denote by $r$ the tangent line to $\gamma$ in $p$ and assume that $q$ also belongs to $r$. The line $r$ splits the plane in two and we assume that all points $w\in\gamma$ which follow $p$ and precedes $q$ are on the same side of the plane, namely the opposite one with respect to where the normal to $\gamma$ in $p$ is pointing. Then, the vector $\frac{q-p}{|q-p|}$ is the tangent vector to $\gamma$ in $p$ (namely the situation in Figure \ref{fig_rotation}(b) is infeasible).
\end{lemma}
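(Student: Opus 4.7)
The plan is to argue by contradiction: assume that $(q-p)/|q-p| = -\mathbf{t}(p)$, corresponding to Figure \ref{fig_rotation}(b), and derive a total curvature on $\wideparen{pq}$ of at most $-\pi$, in contradiction with Lemma \ref{lem_total}.

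First, I would set up coordinates with $p$ at the origin, $\mathbf{t}(p) = \mathbf{e}_1$ and $\mathbf{n}(p) = \mathbf{e}_2$, so that $r$ is the $x$-axis and, by hypothesis, the arc $\wideparen{pq}$ lies in the closed lower half-plane $\{y \le 0\}$; the contradiction hypothesis places $q = (-d, 0)$ with $d := |q-p| > 0$. Next, I would form the closed, piecewise $C^2$ curve $C$ by concatenating the arc $\wideparen{pq}$ with the segment $\overline{qp}$. Since the arc is simple and its relative interior lies strictly below $r$ (meeting $r$ only at the endpoints $p,q$), $C$ is a Jordan curve bounding a region contained in $\{y \le 0\}$. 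Because the bounded region lies to the right of the initial tangent $\mathbf{e}_1$ at $p$, the curve $C$ is traversed clockwise, so its total rotation equals $-2\pi$.

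The main computation is to expand this total rotation via the Gauss--Bonnet formula for piecewise $C^2$ Jordan curves. The arc contributes its total curvature $\Theta$ in the sense of \eqref{rot_c} (this already absorbs a possible $\pi$ jump at a cusp on $\wideparen{pq}$ when $\gamma \in \mathcal{C}_\pi$). At the corner $p$, the segment arrives in the direction $(p-q)/|p-q| = \mathbf{e}_1 = \mathbf{t}(p)$, which matches the arc's outgoing tangent, so the exterior angle vanishes. At the corner $q$, writing $\mathbf{t}(q) = (\cos\alpha_q, \sin\alpha_q)$, the arc approaches $r$ from $\{y \le 0\}$, which forces $\sin\alpha_q \ge 0$, hence $\alpha_q \in [0,\pi]$; the outgoing segment direction is $(1,0)$, so the exterior angle at $q$ equals $-\alpha_q \in [-\pi,0]$. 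Gauss--Bonnet then yields $\Theta - \alpha_q = -2\pi$, so $\Theta = \alpha_q - 2\pi \le -\pi$, contradicting Lemma \ref{lem_total}.

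The main obstacle I anticipate is the geometric bookkeeping that underpins the Gauss--Bonnet step: establishing simplicity of $C$ (by interpreting the one-sidedness hypothesis as strict placement of the relative interior of $\wideparen{pq}$ in $\{y < 0\}$), correctly identifying the clockwise orientation of $C$, and confirming $\sin\alpha_q \ge 0$ at $q$ from the fact that the arc has approached $r$ from below. The degenerate case $\alpha_q = \pi$ produces a cusp at $q$ in $C$ with exterior angle $\pm \pi$; one needs to check that the resulting inequality $\Theta \le -\pi$ still holds there, which it does. Handling a cusp of $\gamma \in \mathcal{C}_\pi$ inside $\wideparen{pq}$ is automatic, since convention \eqref{rot_c} has already bundled the $+\pi$ jump into $\Theta$.
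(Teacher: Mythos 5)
Your proof is correct and takes essentially the same route as the paper's: both close up the arc with the chord $\overline{pq}$, apply the rotation index theorem (Hopf/Gauss--Bonnet) to get total rotation $-2\pi$ in the clockwise configuration of Figure \ref{fig_rotation}(b), note that the corner at $p$ contributes no jump while the jump at $q$ is at least $-\pi$, and conclude that the total curvature of $\wideparen{pq}$ is at most $-\pi$, contradicting Lemma \ref{lem_total}. Your coordinate computation of the exterior angle at $q$ and your remarks on the degenerate cusp cases only make explicit what the paper leaves implicit.
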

\begin{proof}
The proof is the same for curves in $\mathcal{K}_\pi$ and  $\mathcal{C}_\pi$. For simplicity let us assume $\gamma\in\mathcal{K}_\pi$ and therefore $\wideparen{pq}$ is a smooth arc. The situation is depicted in Figure \ref{fig_rotation} and basically there are two different configurations (a) and (b). In both (a) and (b) we consider the piecewise smooth simple closed curve $\psi$ given by the union of the arc $\wideparen{pq}$ and the segment $\overline{pq}$. The orientation of the curve $\psi$ is the one induced by the orientation of $\gamma$. In (a) it is counterclockwise oriented. In (b) it is clockwise oriented. From the well known rotation index theorem \cite{Hopf1935}, the rotation angle of the curve $\psi$ in (a) is $2\pi$ while in (b) it is $-2\pi$. It is easy to prove that in (a) the total curvature of the arc $\wideparen{pq}$ is positive indeed the jump angle in $p$ is $\pi$ and the jump angle in $q$ is less or equal then $\pi$. On the other hand in (b) the total curvature of the arc $\wideparen{pq}$ is negative and in particular less or equal than $-\pi$. In fact the jump angle in $q$ is more then $-\pi$ and in $p$ there is no jump since $p$ is a regular point for $\psi$. Therefore according to Lemma \ref{lem_total} the configuration in Figure \ref{fig_rotation}(b) is impossible. 

\begin{figure}
\def\svgwidth{4.in}
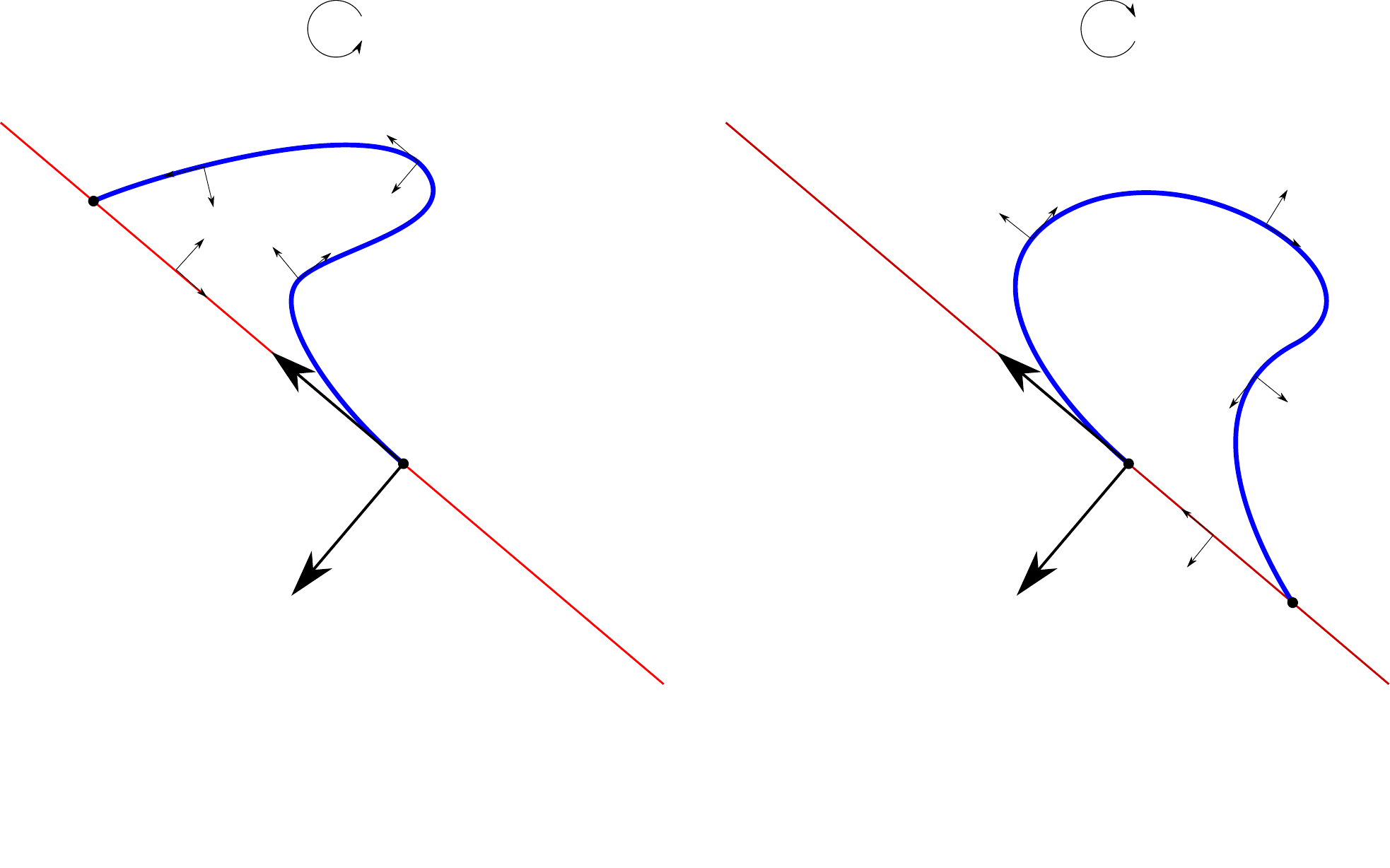\caption{This figure shows the two possible configurations. In (a) the point $q$ follows $p$ on $r$. In (b) the point $q$ precedes $p$ on $r$. Situation (b) is impossible for an arc of a curve $\gamma$ in $\mathcal{K}_\pi$.}\label{fig_rotation}
\end{figure}
\end{proof}

We introduce the following definition.
\begin{definition}
Let $\gamma$ be a curve in $\mathcal{K}_\pi$ or $\mathcal{C}_\pi$, and let $\wideparen{pq}$ be a maximal concave arc. Assume that $p'$ and $q'$ are such that
\begin{itemize}
\item $\wideparen{p'p}$ is convex and the total curvature is $\pi$;
\item $\wideparen{qq'}$ is convex and the total curvature is $\pi$;
\item $\wideparen{qq'}$ intersects $\overline{pp'}$.
\end{itemize}
We say in this case that the arc $\wideparen{qq'}$ is nested into the arc $\wideparen{p'p}$ (see also Figure \ref{fig_nested} (a)).
\end{definition} 
Now we can state a fundamental property of nested arcs.
\begin{proposition}\label{prop_nested}
For a curve $\gamma$ in $\mathcal{K}_\pi$ or $\mathcal{C}_\pi$, let the arc $\wideparen{qq'}$ be nested into the arc $\wideparen{p'p}$. Then $\wideparen{qq'}$ holds a convex sets.
\end{proposition}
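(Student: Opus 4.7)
The plan is to verify the three conditions characterizing arcs that hold a convex set: (i) convexity, (ii) total curvature $\pi$, and (iii) the chord lies in $\overline{\Omega_\gamma}$. Conditions (i) and (ii) are built into the definition of ``nested'', so the task is to prove (iii), namely $\overline{qq'}\subset\overline{\Omega_\gamma}$.

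First I would set up the main geometric objects. Let $K_1$ denote the closed convex region bounded by $\wideparen{p'p}\cup\overline{p'p}$; it is convex because $\wideparen{p'p}$ has total curvature $\pi$, so its tangents at $p'$ and $p$ are antiparallel and the arc lies on one side of $\overline{p'p}$. Let $\ell$ be the line through $p'$ and $p$ and write $H^+$ for the open half-plane containing $\mathrm{int}(K_1)$, $H^-$ for the other. Along $\wideparen{p'p}$ the unit inward normal $\mathbf{n}$ of $\gamma$ points into $K_1$, so near $\wideparen{p'p}$ the region $K_1$ is locally contained in $\overline{\Omega_\gamma}$. Define analogously $K_2$ bounded by $\wideparen{qq'}\cup\overline{qq'}$; the corresponding local statement holds near $\wideparen{qq'}$.

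The first key step is to locate $q$ and $q'$ relative to $\ell$. At $p$ the tangent to $\gamma$ is $-\mathbf{t}_{p'}$, and $\wideparen{pq}$ is concave, hence it curves away from $\Omega_\gamma$ and therefore leaves $p$ into the half-plane $H^-$. Since the curve belongs to $\mathcal{K}_\pi$ or $\mathcal{C}_\pi$, the total curvature of the maximal concave arc $\wideparen{pq}$ lies in $(-\pi,0)$, so the tangent rotates by strictly less than $\pi$ along $\wideparen{pq}$. Combining this bound with Lemma~\ref{lem_rotation} applied to suitable sub-arcs prevents $\wideparen{pq}$ from crossing $\ell$ back into $H^+$, so $q\in\overline{H^-}$. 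An analogous analysis of the initial portion of the convex arc $\wideparen{qq'}$, using the hypothesis that $\wideparen{qq'}$ meets $\overline{p'p}$ and again Lemma~\ref{lem_rotation}, pins down the location of $q'$ and the orientation of the crossing(s) of $\wideparen{qq'}$ with $\overline{p'p}$.

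The second step is to conclude $\overline{qq'}\subset\overline{\Omega_\gamma}$. From the information collected above one deduces that the chord $\overline{qq'}$ is contained in $\overline{K_1}\cup\overline{H^-}$, and more precisely in the closure of the connected component of $K_1\setminus\gamma$ adjacent to $\wideparen{p'p}$; since this component sits inside $\Omega_\gamma$ by the local inward-normal statement combined with a connectedness argument (using simplicity of $\gamma$, so that $\wideparen{qq'}$ and $\wideparen{p'p}$ are disjoint arcs of $\gamma$), we obtain $\overline{qq'}\subset\overline{\Omega_\gamma}$. Together with (i) and (ii) this shows that $\wideparen{qq'}$ holds a convex set. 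The main obstacle is the geometric case analysis in the first step: the nesting hypothesis is purely combinatorial (``$\wideparen{qq'}$ meets $\overline{p'p}$''), and converting it into the full geometric picture — positions of $q$ and $q'$ with respect to $\ell$ and $K_1$, together with the orientation of the intersections — requires iterated use of Lemma~\ref{lem_rotation} combined with the curvature bounds built into the classes $\mathcal{K}_\pi$ and $\mathcal{C}_\pi$.
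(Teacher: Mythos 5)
Your reduction of the statement to condition (iii) --- that $\gamma$ must not cross the open chord $\overline{qq'}$ --- is the right starting point, and you correctly identify Lemma \ref{lem_rotation} and the curvature bounds built into $\mathcal{K}_\pi$ and $\mathcal{C}_\pi$ as the operative tools. But there is a genuine gap exactly where the content of the proposition lies. Locating $q$ and $q'$ relative to the line $\ell$ through $p$ and $p'$ does not control which portions of $\gamma$ can meet the chord $\overline{qq'}$: the danger is not the three named arcs $\wideparen{p'p}$, $\wideparen{pq}$, $\wideparen{qq'}$, but a remote piece of the curve wandering into the region bounded by $\wideparen{qq'}$ and its chord. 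Your second step disposes of this with an unexplained ``connectedness argument,'' and the conclusion drawn there is internally inconsistent: you place $q$ in $\overline{H^-}$ (hence outside $K_1$, since $K_1\subset\overline{H^+}$) while simultaneously claiming that $\overline{qq'}$ lies in the closure of a component of $K_1\setminus\gamma$. Note also that the nesting hypothesis itself puts $\gamma$ across $\overline{p'p}$, so $K_1$ is genuinely cut by the curve and the picture is more delicate than a single local inward-normal statement suggests.

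The paper closes this gap with a contradiction argument that your outline is missing. Suppose some arc of $\gamma$ crosses $\overline{qq'}$. Sliding one endpoint of the chord along $\wideparen{qq'}$ from $q'$ towards $q$, one finds a first position $\tilde q$ at which the sub-chord $\overline{\tilde q q'}$ is touched tangentially by $\gamma$ at some point $q^\star$ but is not crossed; the region bounded by $\overline{\tilde q q'}$ and the corresponding sub-arc then lies in $\Omega_\gamma$, which forces the tangent to $\gamma$ at $q^\star$ to point towards $\tilde q$. Following $\gamma$ from $q^\star$ until it first crosses the line through $\tilde q$ and $q'$ produces exactly the configuration excluded by Lemma \ref{lem_rotation} (which rests on the total-curvature bound of Lemma \ref{lem_total}), a contradiction. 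This single extremal-tangency argument replaces both of your steps; the preliminary location of $q$ and $q'$ relative to $\ell$ is not needed. If you want to salvage your direct approach, you would have to supply an argument of this type for arbitrary crossings of the chord, not just for the arcs adjacent to the nested configuration.
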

\begin{proof}
Arguing by contradiction we assume that the curve $\gamma$ crosses the segment $\overline{qq'}$. If so, by continuity argument there certainly exists a point $\tilde q$ on the arc $\wideparen{qq'}$ such that the curve $\gamma$ is tangent to the segment $\overline{\tilde qq'}$ but does not cross such segment (obviously the end points $\tilde q$ and $q'$ do not count). Let us call $q^\star$ such a tangent point. If the point is not unique, we denote by $q^\star$ the closest one to $\tilde q$. Since the open set bounded by $\overline{\tilde qq'}$ and
$q\hskip-6pt\wideparen{\,{\tilde{\ }}{q'}}$ is a subset of $\Omega_\gamma$, the tangent vector to $\gamma$ at $q^\star$ points in the direction of $\tilde q$.

Let us also denote by $r$ the line passing through $\tilde q$ and $q'$. Thereafter we consider the arc $\wideparen{q^\star p'}$. Following the orientation of $\gamma$ and starting from $q^\star$, the first time such an arc crosses the line $r$ (see Figure \ref{fig_nested}(b)) it violates the conclusion of Lemma \ref{lem_rotation}. Therefore we get a contradiction.
\begin{figure}
\def\svgwidth{6.in}
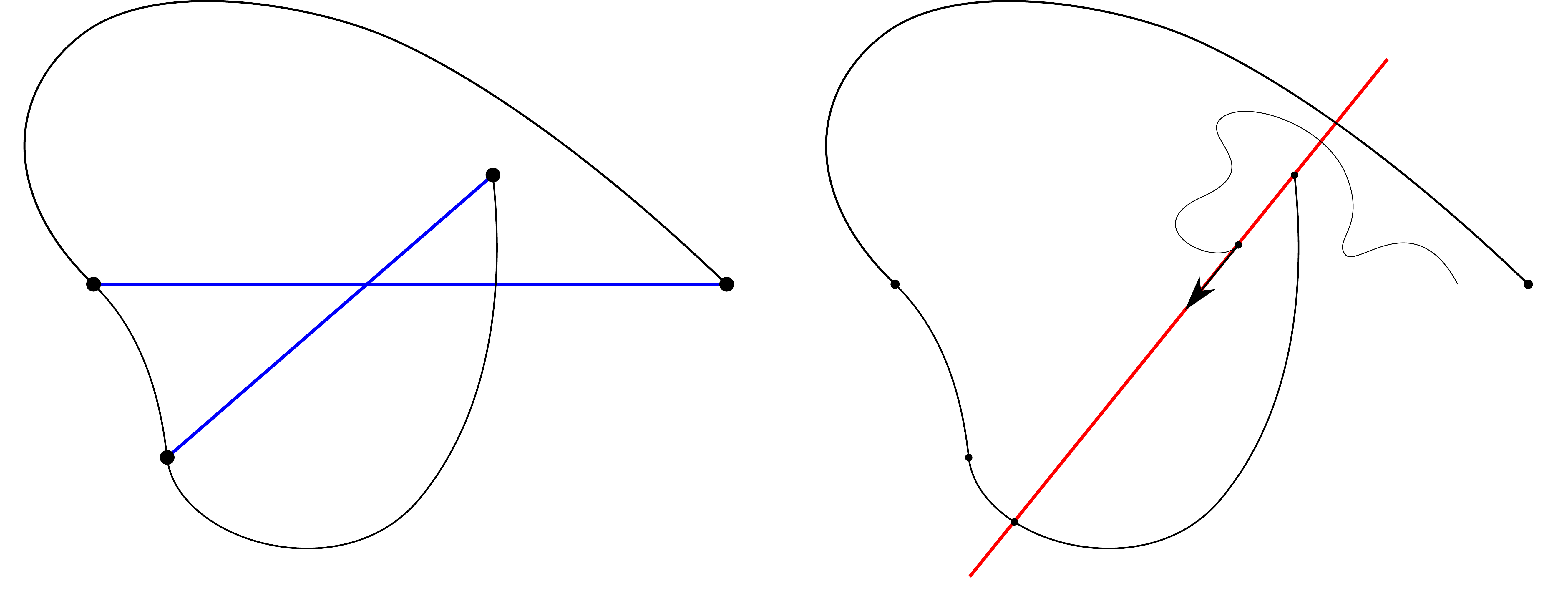\caption{Figure (a) represents the arc $\wideparen{qq'}$ nested in $\wideparen{p'p}$. Figure (b) shows that if $\gamma$ crosses $\overline{qq'}$ then the curve would violate the conclusion of Lemma \ref{lem_rotation}.}\label{fig_nested}
\end{figure}
\end{proof}

\subsection*{End of the proof for final shape in $\mathcal{K}_\pi$}
In this section we show that for curves in $\mathcal{K}_\pi$ Theorem \ref{main_th} is true. This concludes the proof of the Theorem when during the reduction algorithm no pinching occurs to $\gamma$.\\
Let $\gamma\in \mathcal{K}_\pi$. Given two distinct points $h$ and $f$ on $\gamma$ we say that $\overline{hf}$ is a chord for $\gamma$ if, with the exception of the end points, it is contained in $\Omega_\gamma$. 

Given a maximal convex arc $\wideparen{pq}$ on $\gamma$ we denote by $p^\sharp$ and $q^\sharp$ two points such that 
$$\int\limits_{\wideparen{pp^\sharp}} k(s)ds=\pi,$$ and
$$\int\limits_{\wideparen{q^\sharp q}} k(s)ds=\pi.$$
In case $p^\sharp$ is not uniquely determined (this might happen possibly if somewhere $\gamma$ contains a segment and $p^\sharp$ belongs to such a segment), we choose the point which also minimizes the arc length distance to $p$. In case $q^\sharp$ is not uniquely determined, we choose the point which also minimizes the arc length distance to $q$. \\ 

Example in Figure \ref{fig_ex}(a) shows that if $\#\gamma=1$ then Theorem \ref{main_th} is true. Therefore we assume that $\#\gamma\ge 2$. Moreover we can assume that there exists at least one maximal convex arc $\wideparen{q_1p_1}$ such that none of the arcs $\wideparen{q'p'}\subset\wideparen{q_1p_1}$ holds a convex set, otherwise the proof is complete. We say in this case that arc $\wideparen{q_1p_1}$ is \emph{void}. If $\wideparen{q_1p_1}$ is void then $\gamma$ crosses the segment $\overline{p_1p_1^\sharp}$ somewhere in between the two endpoints. There exists therefore $p_1^\dagger$ on $\overline{p_1p_1^\sharp}$ such that $\overline{p_1p_1^\dagger}$ is a chord for $\gamma$. 
In the same way we can find $q_1^\dagger$ on $\overline{q_1q_1^\sharp}$ such that $\overline{q_1q_1^\dagger}$ is a chord for $\gamma$. See Figure \ref{fig_sharp} 

\begin{figure}
\def\svgwidth{4.in}
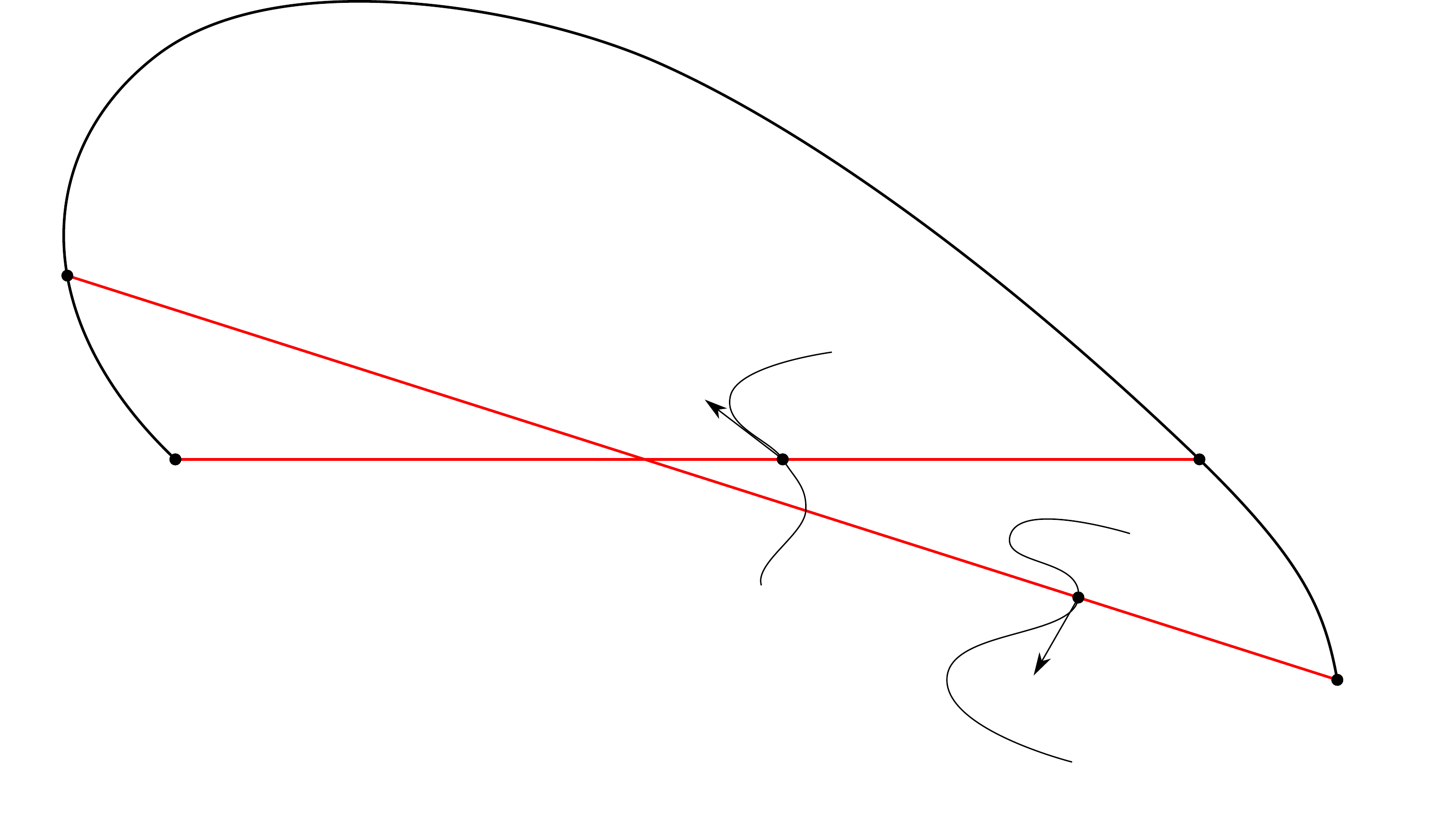\caption{The arc $\wideparen{q_1p_1}$ and the points $p_1^\sharp$, $p_1^\dagger$, $q_1^\sharp$, $q_1^\dagger$.
}\label{fig_sharp}\par
\end{figure}


Clearly $\overline{p_1p_1^\dagger}$ and $\overline{q_1q_1^\dagger}$ do not intersect, and therefore the arcs $\wideparen{p_1p_1^\dagger}$ and $\wideparen{q_1^\dagger q_1}$ are disjoint. In fact, since the set $\Omega\gamma$ is homeomorphic to any disk we can represent $\gamma$ as the unit circle counterclockwise oriented. See for instance Figure \ref{fig_chord}. Regardless the position of the points $q_1$ and $p_1$, if we look on the arc $\wideparen{p_1q_1}$ the point $q_1^\dagger$ can follow $p_1^\dagger$ as in Figure \ref{fig_chord}$(a)$ or precedes as in Figure \ref{fig_chord}$(b)$. We represent $\overline{p_1p_1^\dagger}$ in green and $\overline{q_1q_1^\dagger}$ in blue (they are segment in $\Omega\gamma$ but after the homeomorphism they become  continuous curves). Since we know that they do not intersect (a property which is invariant under homeomorphism)  then Figure \ref{fig_chord}$(a)$ is the only possible configuration. Therefore $\wideparen{p_1p_1^\dagger}$ and $\wideparen{q_1^\dagger q_1}$ are disjoint.

\begin{figure}
\def\svgwidth{5.in}
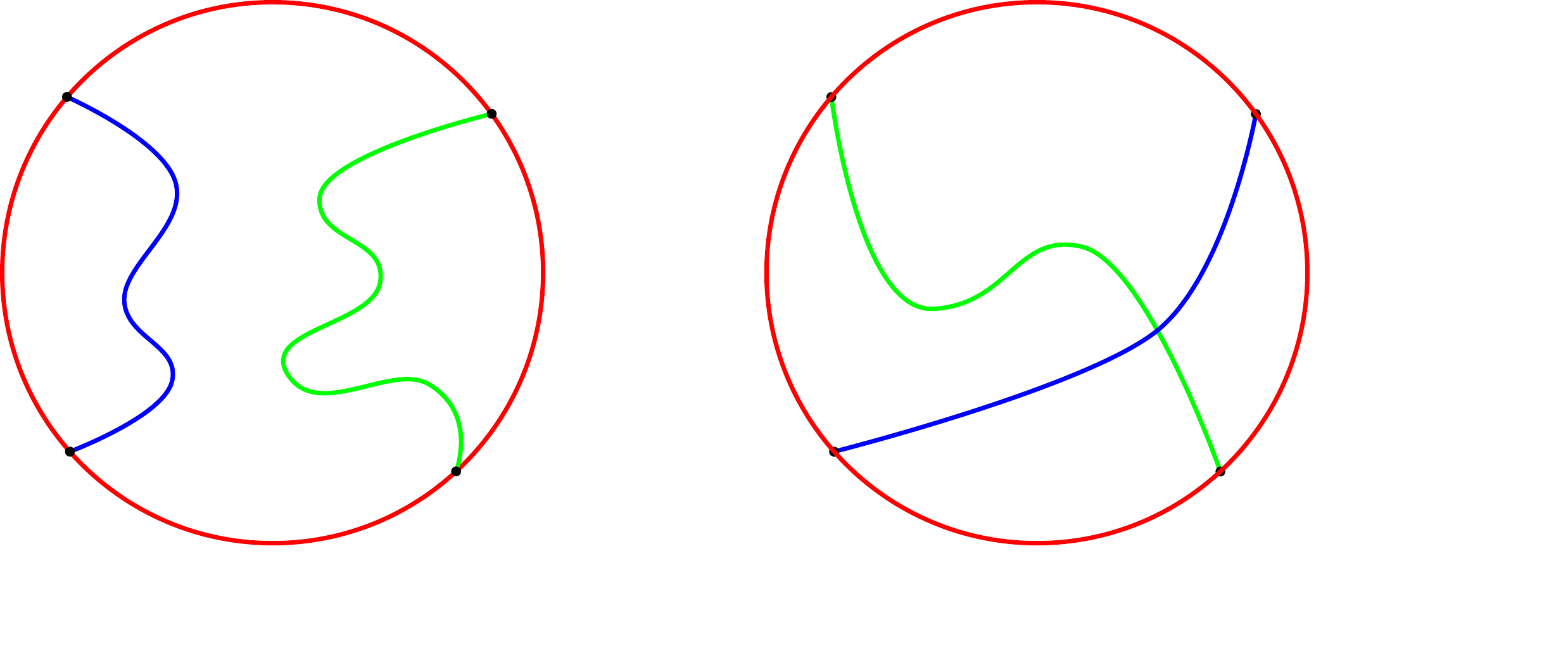\caption{
The set $\Omega_\gamma$ is homeomorphic to a disk. This figure shows the image through such an homeomorphism of the curve $\gamma$ in red, the chords $\overline{p_1p_1^\dagger}$ in green and $\overline{q_1q_1^\dagger}$ in blue.}\label{fig_chord}\par
\end{figure}

We claim that there exists a non void maximal convex arc, say $\Gamma_p$, having not empty intersection with $\wideparen{p_1p_1^\dagger}$ and a non void maximal convex arc, say $\Gamma_q$, having not empty intersection with $\wideparen{q_1^\dagger q_1}$. Moreover we claim that such two arcs are distinct.  
With this respect we observe that if they exists they are certainly distinct. If for instance $\Gamma_p\subset \wideparen{p_1p_1^\dagger}$ or $\Gamma_q\subset \wideparen{q_1^\dagger q_1}$, then $\Gamma_p\not\equiv\Gamma_q$ because $\wideparen{p_1p_1^\dagger}\cap\wideparen{q_1^\dagger q_1}=\emptyset$. In order to have $\Gamma_p\equiv\Gamma_q$ one should have $\wideparen{p_1^\dagger q_1^\dagger}\subset\Gamma_p$. This means that the arc $\wideparen{p_1^\dagger q_1^\dagger}$ is a convex arc which contradict the fact that the rotation angle of $\wideparen{p_1^\dagger q_1^\dagger}$ is negative.

Now, let us restrict our attention on $\wideparen{p_1p_1^\dagger}$. We want to find a non maximal convex arc having not empty intersection with $\wideparen{p_1p_1^\dagger}$. The same argument can be then repeated for $\wideparen{q_1^\dagger q_1}$.

We observe that $\wideparen{p_1p_1^\dagger}$ has non empty intersection with the convex arc which follows $\wideparen{q_1p_1}$ in the counterclockwise orientation.

We denote such a maximal convex arc $\wideparen{q_2p_2}$. There are two possibilities:
\begin{itemize}
\item[Case 1:] $p_2$ does not belong to $\wideparen{p_1p_1^\dagger}$.
\item[Case 2:] $p_2$ belongs to $\wideparen{p_1p_1^\dagger}$.
\end{itemize} 

\begin{figure}
\def\svgwidth{6.in}
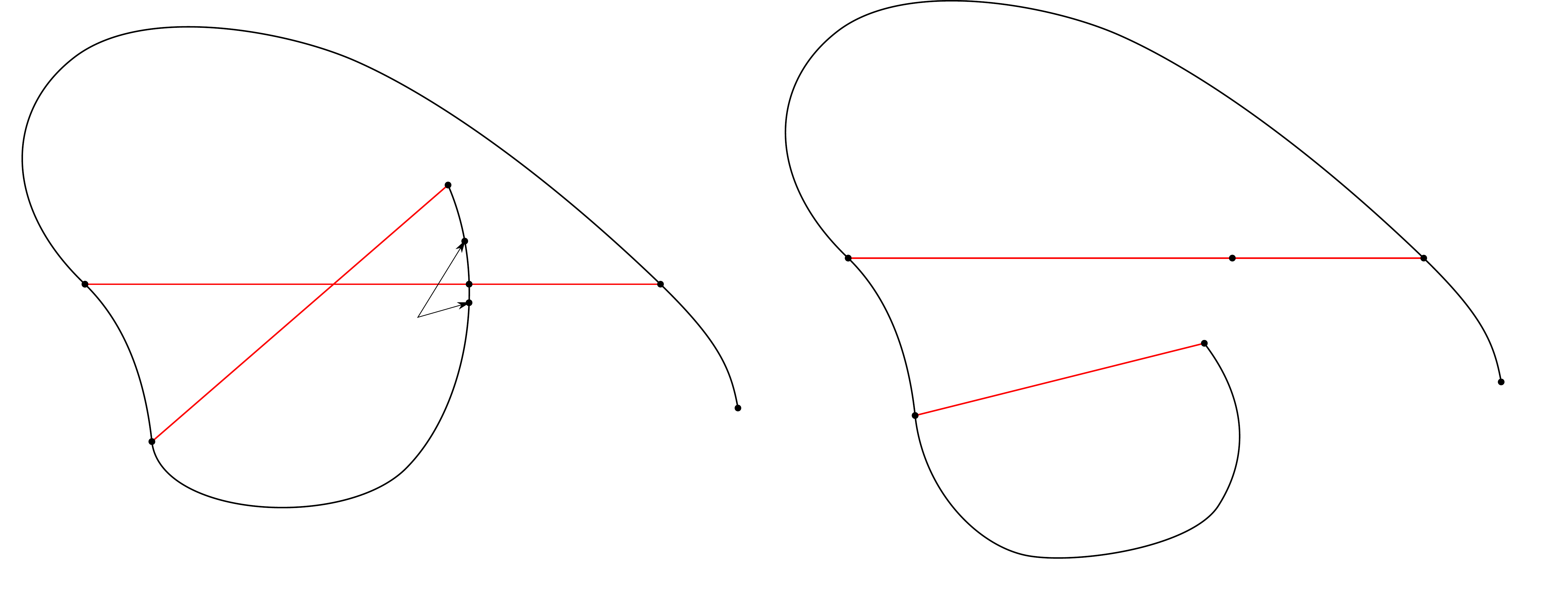\caption{In Case 1 $p_2$ does not belong to $\wideparen{p_1p_1^\dagger}$. The point $q_2^\sharp$ may precede or follow $p_1^\dagger$.
In case (2) $p_2$ belongs to $\wideparen{p_1p_1^\dagger}$}\label{fig_two_cases}
\end{figure}

In Case 1, see Figure \ref{fig_two_cases}(1), if the point $p_1^\dagger$ does not belong to the arc $\wideparen{q_2q_2^\sharp}$, then $\overline{q_2q_2^\sharp}$ intersects $\gamma$ only at the endpoints. If $p_1^\dagger$ belongs to the arc $\wideparen{q_2q_2^\sharp}$ then the arc $\wideparen{q_2q_2^\sharp}$ is nested into the arc $\wideparen{p_1^\sharp p_1}$ and therefore we can use Proposition \ref{prop_nested}. In both cases $\wideparen{q_2q_2^\sharp}$ holds a convex set.

In case (2), see Figure \ref{fig_two_cases}(2), the arc $\wideparen{q_2p_2}$ might be void or not. If it is non void then there exist an arc $\wideparen{q'_2p'_2}\subset \wideparen{q_2p_2}$ which holds a convex set and the claim is proved. If on the contrary the arc $\wideparen{q_2p_2}$ is void then we can iterate the argument. We can indeed find a chord $\overline{p_2p_2^\dagger}$ such that the arc $\wideparen{p_2p_2^\dagger}$ (contained in $\wideparen{p_1p_1^\dagger}$) has non empty intersection with the maximal convex arc which follows $\wideparen{q_2p_2}$ in the counterclockwise orientation and so on.

Since $\# \gamma$ is finite we can iterate the the argument only a finite number of steps after which necessarily for some $i\in \mathbb{N}$ there exists a non void arc $\wideparen{p_{i+1}q_{i+1}}$ with non empty intersection with $\wideparen{p_{i}p_{i}^\dagger}$ and the claim is proved.
   
\subsection*{End of the proof for final shape in $\mathcal{C}_\pi$}
In this section we show that all curves in $\mathcal{C}_\pi$ have always at least one arc holding a convex set. This fact, together with Corollary \ref{c_two_convex} concludes the proof of Theorem \ref{main_th}. 

Let $\gamma\in \mathcal{C}_\pi$, by elementary geometric arguments the curve must have at least one maximal convex arc such that the integral of the curvature exceeds the value $\pi$. If there exists just one such arc then it is necessarily a non void arc. We assume therefore that there exists more than one such arc, and then at least one of them doesn't have the cusp as endpoint. Observe that the cusp might be the end point of two concave arcs, or possibly one concave arc and one convex arc, but definitely not the endpoint of two convex arcs. Therefore we assume to start from a maximal convex arc $\wideparen{q_1p_1}$ and the nontrivial case is the case where such an arc is also void. We proceed as in the previous paragraph. Since $\wideparen{q_1p_1}$ is void then $\gamma$ crosses the segment $\overline{p_1p_1^\sharp}$ somewhere in between the two endpoints. There exists therefore $p_1^\dagger$ on $\overline{p_1p_1^\sharp}$ such that $\overline{p_1p_1^\dagger}$ is a chord for $\gamma$. In the same way we can find $q_1^\dagger$ on $\overline{q_1q_1^\sharp}$ such that $\overline{q_1q_1^\dagger}$ is a chord for $\gamma$. Clearly $\overline{p_1p_1^\dagger}$ and $\overline{q_1q_1^\dagger}$ do not intersect, and in view of what we observed before (see Figure \ref{fig_chord}) the arcs $\wideparen{p_1p_1^\dagger}$ and $\wideparen{q_1^\dagger q_1}$ are disjoint. However we might be not able to repeat all arguments used before since possibly,  either in $\wideparen{p_1p_1^\dagger}$ or in $\wideparen{q_1^\dagger q_1}$ we might encounter the cusp. Nevertheless the argument has to work in one of the two cases and  this completes the proof.

\subsection*{Uniqueness in Theorem \ref{main_th}}
So far our proof characterizes the circle as the unique curve achieving the equality in \eqref{isopineq} in the restricted class of simple closed $C^1$ and piecewise-$C^2$ curves whose curvature changes sign only a finite number of times. In particular we have uniqueness in the class of analytic curves. We claim that the circle is actually unique in the wider class of simple closed $W^{2,2}$ curves. To this aim we first observe that existence of a minimizer of $E(\cdot)^2 A(\cdot)$ in $W^{2,2}$ is established via approximation. Then we observe that any minimizer in $W^{2,2}$ has the curvature which satisfies \eqref{eq_formal}
and we can conclude that a minimizer is also an analytic curve. Our claim immediately follows.

\bigskip

\noindent {\bf Acknowledgement.} We thank A.Henrot for helpful discussions during a visit to Napoli in 2013. After we had completed this manuscript in November 2014 (arXiv:1411.6100) and sent it to colleagues for comments, we were kindly informed by A.Henrot that he and D.Bucur had answered the open problem addressed in the present paper with a different proof. Their final written proof has appeared in December 2014 on CVGMT (http://cvgmt.sns.it/paper/2582/).
C.Nitsch was financially supported by an Alexander-von-Humboldt grant and by Progetto Star ``SInECoSINE". First and third authors are members of the Gruppo Nazionale per l'Analisi Matematica, la Probabilit\`a e le loro Applicazioni (GNAMPA) of the Istituto Nazionale di Alta Matematica (INdAM).


\begin{thebibliography}{10}

\bibitem{Arreaga2002}
G.~Arreaga, R.~Capovilla, C.~Chryssomalakos,  J.~Guven,
\newblock {Area-constrained planar elastica}.
\newblock {\em Phys. Rev. E}, 65(3):031801-14, 2002.

\bibitem{Avvakumov2012}
S.~Avvakumov, O.~Karpenkov, and A.~Sossinsky,
\newblock {Euler elasticae in the plane and the Whitney-Graustein theorem}.
\newblock {\em Russ. J. Mathem. Phys.}, 20(3):257--267, 2013.

\bibitem{Bianchini2014}
C.~Bianchini, A.~Henrot, and T.~Takahashi,
\newblock {Elastic energy of a convex body}.
\newblock {\em arXiv Prepr. arXiv:1406.7305}, 2, 2014.


\bibitem{Dondl2011}
P.W.~Dondl, L.~Mugnai, and M.~R\"{o}ger,
\newblock {Confined elastic curves}.
\newblock {\em SIAM J. Appl. Math.}, 71(6):2205--2226, 2011.

\bibitem{Dziuk2002}
G.~Dziuk, E.~Kuwert, and R.~Sch\"atzle,
\newblock {Evolution of elastic curves in $\mathbb{R}^n$: existence and computation}.
\newblock {\em SIAM J. Math. Anal.}, 33(5):1228--1245, 2002.

\bibitem{Enomoto1997}
K.~Enomoto,
\newblock {Squared Curvatures are Almost Minimal}.
\newblock {\em Geom. Dedicata}, 65:193--201, 1997.

\bibitem{Enomoto2013}
K.~Enomoto and M.~Okura,
\newblock {The Total Squared Curvature of Curves and Approximation by Piecewise
  Circular Curves}.
\newblock {\em Results Math.}, 64(1-2):215--228, 2013.

\bibitem{Gage1983}
M.E.~Gage,
\newblock {An isoperimetric inequality with applications to curve shortening}.
\newblock {\em Duke Math. J.}, 40(4):1225--1229, 1983.

\bibitem{Gage1986}
M.E.~Gage and R.S.~Hamilton,
\newblock {The heat equation shrinking convex plane curves}.
\newblock {\em J. Differential Geom.}, 23(1):69--96, 1986.

\bibitem{Gage1990}
M.E.~Gage,
\newblock {Positive centers and the Bonnesen inequality}.
\newblock {\em Proc. Am. Math. Soc.}, 110	(4):1041--1048, 1990.

\bibitem{Giomi2012}
L.~Giomi and L.~Mahadevan,
\newblock{Minimal surfaces bounded by elastic lines}
\newblock {\em Proc. R. Soc. A Math. Phys. Eng. Sci.},468(2143):1851--1864, 2012.

\bibitem{Hopf1935}
H.~Hopf.
\newblock {\"Uber die Drehung der Tangenten und Sehnen ebener
  Kurven}.
\newblock {\em Compos. Math.}, 2:50--62, 1935.



\bibitem{Koiso1992}
N.~Koiso,
\newblock{Elasticae in a Riemannian submanifold}.
\newblock{\em Osaka J. Math.} 29(3): 539--543, 1992.

\bibitem{Langer1984}
J.~Langer and D.A.~Singer,
\newblock {The total squared curvature of closed curves}.
\newblock {\em J. Differ. Geom.}, 20(1):1--22, 1984.

\bibitem{Langer1984_1}
J.~Langer and D.A.~Singer,
\newblock {Knotted elastic curves in $\mathbb{R}^3$}.
\newblock {\em J. London Math. Soc.}, 30(2):512--520, 1984.

\bibitem{Langer1984_2}
J.~Langer and D.A.~Singer,
\newblock {Curves in the hyperbolic plane and mean curvature of tori in 3-space}.
\newblock {\em Bull. Lond. Math. Soc.}, 16:531--534, 1984.

\bibitem{Langer1985}
J.~Langer and D.A.~Singer,
\newblock {Curve straightening and a minimax argument for closed elastic
  curves}.
\newblock {\em Topology}, 24(1):75--88, 1985.

\bibitem{Levien2008}
R.~Levien, 
\newblock{The elastica: A mathematical history}.
\newblock{Tech. Rep. UCB/EECS-2008-103, EECS Department, University of California, Berkeley}, 2008.

\bibitem{Linner1998}
A.~Linn\'er,
\newblock {Curve-straightening
and the Palais-Smale condition}.
\newblock {\em Trans. Amer. Math. Soc.}, 350(9):3743--3765, 1998.

\bibitem{Lin2012}
Y.-C.~Lin and D.-H.~Tsai,
\newblock {Application of Andrews and Green-Osher inequalities to nonlocal flow of convex plane curves}.
\newblock {\em J. Evol. Equations}, 12(4):833--854, 2012.

\bibitem{Okabe2007}
S.~Okabe,
\newblock {The motion of elastic planar closed curves under the area-preserving
  condition}.
\newblock {\em Indiana Univ. Math. J.}, 56(4):1871--1913, 2007.

\bibitem{Osserman1979}
R.~Osserman, 
\newblock{Bonnesen-style isoperimetric inequalities}. 
\newblock{\em Amer. Math. Monthly} 86(4):1--29, 1979.

\bibitem{Radon1928}
J.~Radon,
\newblock {Zum Problem von Lagrange}.
\newblock {\em Abh. Math. Sem. Univ Hamburg}, 6(1):273--299, 1928.

\bibitem{Sachkov2012}
Yu.L.~Sachkov,
\newblock {Closed Euler elasticae}.
\newblock {\em Proc. Steklov Inst. Math.}, 278(1):218--232, 2012.

\bibitem{Singer2008}
D.A. Singer,
\newblock {Lectures on elastic curves and rods}.
\newblock{Curvature and variational modeling in physics and biophysics.}
\newblock {\em AIP Conf. Proc., 1002, Amer. Inst. Phys., Melville, NY}, 2--32, 2008.

\bibitem{Smith1981}
L.~Smith,
\newblock {The Asymptotics of the Heat Equation for a Boundary Value Problem}.
\newblock {\em Invent. Math.}, 63(3):467--493, 1981.

\bibitem{Truesdell1983}
C.~Truesdell,
\newblock {The influence of elasticity on analysis: the classic heritage}.
\newblock {\em Bull. Amer. Math. Soc. (N.S.)}, 9(3):293--310, 1983.

\bibitem{Veerapaneni2009}
Veerapaneni, Shravan K. and Raj, Ritwik and Biros, George and Purohit, Prashant K.,
\newblock{Analytical and numerical solutions for shapes of quiescent two-dimensional vesicles}
\newblock{Int. J. Non. Linear. Mech.}, 44(3):257--262, 2009.

\bibitem{Watanabe2000}
K.~Watanabe,
\newblock {Plane domains which are spectrally determined}.
\newblock {\em Ann. Glob. Anal. Geom.}, 18(1):447--475, 2000.

\bibitem{Watanabe2002}
K.~Watanabe,
\newblock {Plane domains which are spectrally determined II}.
\newblock {\em J. Inequalities Appl.}, 7(1):25--47, 2002.

\bibitem{Wen1995}
Y.Z.~Wen,
\newblock {Curve Straightening Flow Deforms Closed Plane Curves with Nonzero
  Rotation Number to Circles}.
\newblock {\em J. Differ. Equ.}, 120(1):89--107, 1995.

\bibitem{Willmore2000}
T.~Willmore,
\newblock {Curves}.
\newblock {\em Handbook of Differential Geometry Vol. 1},  
\newblock{North-Holland Amsterdam} 997--1023, 2000.

\end{thebibliography}
\end{document}